\documentclass{amsart}
\usepackage[margin=1in]{geometry}
\usepackage{amsmath}
\usepackage{amssymb}
\usepackage{amsthm}
\usepackage{mathrsfs}
\usepackage{amscd}
\usepackage{amsfonts}
\usepackage{graphicx}%
\usepackage{fancyhdr}
\usepackage{soul}
\usepackage{xcolor}
\usepackage{hyperref}
\usepackage[semibold]{sourceserifpro}
\usepackage{newtxtext,newtxmath}
\usepackage{extpfeil}

\usepackage{thmtools} %for clever ref

\usepackage[capitalize]{cleveref}
\crefname{equation}{}{}
\crefname{enumi}{}{}
\creflabelformat{enumi}{(#2#1#3)}

\usepackage{tikz-cd}
\usepackage{tikz}
\usetikzlibrary{calc}
\usetikzlibrary{arrows}

\theoremstyle{plain} \numberwithin{equation}{section}
\newtheorem{theorem}{Theorem}[section]

\newtheorem{lemma}[theorem]{Lemma}
\newtheorem{proposition}[theorem]{Proposition}
\theoremstyle{definition}
\newtheorem{definition}[theorem]{Definition}

\newtheorem{remark}[theorem]{Remark}
\newtheorem{example}[theorem]{Example}
\newtheorem{question}{Question} \topmargin-2cm
\newtheorem*{theorem*}{Theorem}

\newcommand{\R}{\mathbb{R}}

\newcommand{\Z}{\mathbb{Z}}

\newcommand{\Q}{\mathbb{Q}}
\newcommand{\F}{\mathbb{F}}

\newcommand{\ip}[1]{\left\langle #1 \right\rangle}

\DeclareMathOperator{\conv}{conv}
\DeclareMathOperator{\spn}{span}

\usepackage[backend=biber,
        style=alphabetic,citestyle=alphabetic,
        maxnames=99,
        url=false,isbn=false,
        hyperref=true,backref=true]{biblatex}

\DefineBibliographyStrings{english}{
backrefpage = {$\uparrow$},
backrefpages = {$\uparrow$}
}
\title[On invariants of representations of Weyl groups]{On invariants of representations of Weyl groups associated with the cohomology of toric varieties}

\author{Tao Gong}
\address{Department of Mathematics, University of Western Ontario}
\email{tgong23@uwo.edu}

\subjclass[2020]{Primary 13A50; Secondary 14M25, 20F55, 52B20}

\keywords{Weyl group, cohomology of toric variety, permutohedron, polytopal algebra, invariant theory, root system}

\begin{document}
\begin{abstract}
  For a Weyl group $W$ and a $W$-permutohedron $P$, there are associated toric varieties $X_P$ and $X_{P/W_K}$ for any parabolic subgroup $W_K$ of $W$, since the quotient $P/W_K$ can be identified with a polytope inside $P$. 
  We construct an explicit algebra isomorphism between $H^*(X_{P/W_K};\Q)$ and $H^*(X_P;\Q)^{W_K}$. 
  We further generalize this isomorphism to intermediate lattices, to finite Coxeter groups, and to non-degenerate $W$-symmetric polytopes. 
  Our results give affirmative answers to two open questions of Horiguchi--Masuda--Shareshain--Song.
\end{abstract}
\maketitle

\section{Introduction}

In  a (real) Euclidean space $\left(V,\ip{-,-}\right)$ of dimension $n$, 
let $R$ be a reduced crystallographic root system with fixed simple system $S=\{\alpha_1,\alpha_2,\ldots,\alpha_n\}$.
 For a subset $K\subset S$, let $C_K$ denote the closed polyhedral cone consisting of vectors whose inner products with the roots in $K$ are non-negative.
 The Weyl group $W$ of $R$ is generated by reflections across the hyperplanes whose normal vectors are the simple roots in $S$.
 Let $P_{\lambda}$ be a \textbf{$W$-permutohedron}, that is, $P_{\lambda}=\conv\left(W(\lambda)\right)$ where $\lambda$ lies in the relative interior of $C_S$.
 Furthermore, we assume that  $\lambda$ lies in the root lattice. Then
  $P_{\lambda}$ is a simple integral polytope in the real span of the root lattice and defines a toric variety $X_{P_{\lambda}}$.
  The Weyl group $W$ acts on $P_{\lambda}$, and hence on the toric variety $X_{P_{\lambda}}$ and its rational cohomology ring $H^*(X_{P_{\lambda}};\Q)$. 
  This representation has been studied in \cite{procesiToricVarietyAssociated1990,stembridgePermutationRepresentationsWeyl1994,dolgachevCharacterFormulaRepresentation1994,lehrerRationalPointsCoxeter2008}, among other works.

 The \textbf{parabolic subgroup} $W_K$ of $W$ determined by the subset  $K$ of $S$ also acts on $P_{\lambda}$. 
 The quotient $P_{\lambda}/W_K$ can be identified with the polytope $P_{\lambda}\cap C_K$. 
 It is therefore associated with a toric variety $X_{P_{\lambda}/W_K}$.
  The relation between rational cohomology rings $H^*(X_{P_{\lambda}};\Q)$  and $H^*(X_{P_{\lambda}/W_K};\Q)$  has  attracted considerable attention. 
  Horiguchi--Masuda--Shareshain--Song \cite{horiguchiToricOrbifoldsAssociated2024} explicitly constructed an algebra isomorphism between $H^*(X_{P_{\lambda}/W_K};\Q)$ and $H^*(X_{P_{\lambda}};\Q)^{W_K}$ for $R$ of classical Lie types, and asked whether such an isomorphism exists for other root systems \cite[Questions 8.1, 8.2]{horiguchiToricOrbifoldsAssociated2024}.

These questions were previously addressed by Song \cite{songToricSurfacesReflection2022} for rank-two root systems, who also proved the isomorphism for $W$-symmetric polygons, and by Gui--Hu--Liu \cite{guiWeylGroupSymmetries2025} for arbitrary root systems in the case $W_K = W$. 
In this paper, we provide a complete affirmative answer to these questions.
One result of this paper is the following.
\begin{theorem}\label{coho ring iso}
  For any parabolic subgroup $W_K$, there is an explicit ring isomorphism
  $$H^*(X_{P_{\lambda}/W_K};\Q)\cong H^*(X_{P_{\lambda}};\Q)^{W_K}.$$
\end{theorem}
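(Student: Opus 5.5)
Our plan is to work with the polytopal (Stanley--Reisner / Brion) presentation of the rational cohomology of a toric variety attached to a simple polytope. For a simple integral polytope $Q$ with facets $F_1,\dots,F_m$ and primitive inward normals $u_1,\dots,u_m$, one has $H^*(X_Q;\Q)\cong \Q[x_1,\dots,x_m]/(I_Q+J_Q)$. Here $I_Q$ is the Stanley--Reisner ideal, generated by the monomials $x_{i_1}\cdots x_{i_k}$ with $F_{i_1}\cap\cdots\cap F_{i_k}=\emptyset$, and $J_Q$ is generated by the linear forms $\sum_i\ip{v,u_i}x_i$ for $v\in V$. Equivalently, we use the algebra of piecewise polynomial functions on the normal fan modulo the global linear functions. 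We would first record the combinatorics.

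The facets of $P_\lambda$ are indexed by the cosets $wW_{S\setminus\{\alpha\}}$, with normals $w\varpi_\alpha^\vee$ (fundamental coweights). The polytope $P_\lambda\cap C_K$ has two kinds of facets:
\begin{itemize}
\item the facets $F\cap C_K$ for those facets $F$ of $P_\lambda$ meeting the interior of $C_K$, which are represented by minimal coset representatives relative to $W_K$;
\item the wall facets $\{\ip{\cdot,\alpha}=0\}\cap P_\lambda$ for $\alpha\in K$, with normal $\alpha$, or a rescaling of it to be primitive. Rational coefficients make this rescaling harmless.
\end{itemize}
Simplicity of $P_\lambda\cap C_K$ should follow because near any point the cone $C_K$ and the permutohedron intersect as a product of a Coxeter chamber of a parabolic and a simple cone.

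Next we would define the map. A $W_K$-invariant class on $X_{P_\lambda}$ is a $W_K$-invariant piecewise polynomial $f$ on the $W$-fan (the Coxeter arrangement fan). Restricting $f$ to the union of cones lying in $C_K$ gives a piecewise polynomial on the normal fan of $P_\lambda\cap C_K$. Near the walls $\alpha^\perp$ this fan is refined appropriately, and the dual fan of the quotient polytope is the image of the Coxeter fan under folding by $W_K$. This defines a ring homomorphism $\phi\colon H^*(X_{P_\lambda};\Q)^{W_K}\to H^*(X_{P_\lambda/W_K};\Q)$, since restriction preserves products and sends linear functions to linear functions. To make it explicit we would describe it on generators: orbit sums $\sum_{w\in W_K}w\cdot x_F$ map to $x_{F\cap C_K}$ (up to stabilizer factors), and the wall variables are expressed through the linear relations.

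Then we would prove $\phi$ is an isomorphism. Surjectivity: a piecewise polynomial on the folded fan extends uniquely to a $W_K$-invariant piecewise polynomial on the full Coxeter fan by $W_K$-translation. The only issue is continuity across the reflecting walls, which is automatic because a reflection fixes its wall pointwise. Injectivity: if the restriction is a global polynomial $p$ modulo the ideal generated by $V^*$, then averaging over $W_K$ shows $f$ is congruent to a $W_K$-invariant global polynomial, which is zero modulo the ideal generated by $V^*$ after rationally averaging (Reynolds operator). As a check, we would compare Poincaré polynomials. Both sides have $h$-vector type dimension counts: by Stembridge/Procesi the invariants count $W_K$-orbits of chambers, weighted by descents, and this matches the $h$-vector of $P_\lambda\cap C_K$ computed by a shelling through minimal coset representatives.

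The main obstacle is the injectivity/identification of the quotient. The ideal for the quotient is generated by linear forms on all of $V$, while the invariant ring sits over $\Q[V]^{W_K}$, which is not generated in degree one. So one must show that a $W_K$-invariant piecewise polynomial which becomes globally polynomial on $C_K$ lies in $V^*\cdot PP$ before taking invariants. We would handle this using freeness: the piecewise polynomials on a complete simplicial fan form a free $\Q[V]$-module, and taking $W_K$-invariants commutes with $\otimes_{\Q[V]}\Q$ because invariants are exact in characteristic zero. This gives $H^*(X_{P_\lambda})^{W_K}=PP^{W_K}/(V^*PP)^{W_K}$. We would then identify $PP^{W_K}$ with piecewise polynomials on the folded fan, and $(V^*PP)^{W_K}$ with $V^*\cdot PP(\text{folded})$. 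The last identification is the delicate step; we would first try comparing graded dimensions via the Poincaré polynomial count above.
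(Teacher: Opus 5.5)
\paragraph{A gap in the construction.} The construction rests on identifying the normal fan of $P_\lambda\cap C_K$ with the Coxeter fan folded into $C_K$, and that identification is false. The wall facets $H_k\cap P_\lambda$ have outward normals $\ell_{H_k}$ that are \emph{negative} multiples of $\alpha_k$. These rays lie outside $C_K$, and every vertex of $P_\lambda\cap C_K$ on $\partial C_K$ has a normal cone containing one of them.

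The cones of the Coxeter fan $\Sigma$ inside $C_K$ are only the normal cones at the vertices $w\lambda$ that lie in $C_K$. For $K\neq\emptyset$ they form a non-complete subfan, which corresponds to an open toric subvariety $U\subsetneq X_{P_\lambda/W_K}$. So restricting a $W_K$-invariant piecewise polynomial to $C_K$ gives a class on $U$, not on $X_{P_\lambda/W_K}$. The isomorphism $PP(\Sigma)^{W_K}\cong PP(\Sigma\cap C_K)$ you use for surjectivity is true, but it lands in the wrong ring, and ``refining near the walls'' does not produce the missing cones.

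\paragraph{The delicate step fails.} Take type $A_1$ with $K=S$. Then $PP(\Sigma)^W=\{(p(x),p(-x))\}\cong\Q[x]$. Elements of $V^*PP(\Sigma)$ vanish at $0$ with equal one-sided derivatives, while invariance forces opposite ones, so $(V^*PP(\Sigma))^W$ corresponds to $x^2\Q[x]$. By contrast, $V^*\cdot PP(\Sigma\cap C_S)=x\Q[x]$. The two quotients are therefore $\Q[x]/(x^2)\cong H^*(\mathbb{P}^1)$, which is the correct answer, and $\Q$.

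The underlying reason is that for $u\in V^*$, the $W_K$-invariant extension of $u|_{C_K}$ is not linear unless $u$ is $W_K$-invariant. Restriction and extension therefore do not carry the linear relations of $P_\lambda/W_K$ to those of $P_\lambda$. A Poincar\'e-polynomial count cannot rescue a map that is not well defined.

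\paragraph{What the paper does instead.} The missing idea is how to treat the wall variables. Naive restriction kills the Courant function $Y_k$ of $\ell_{H_k}$, since it vanishes on $C_K$. The paper works in the opposite direction, $H^*(X_{P_\lambda/W_K})\to H^*(X_{P_\lambda})^{W_K}$, and sends
\begin{itemize}
\item $X_{F,e}\mapsto\sum_{s\in W_K/W_F}X_{F,s}$, and
\item $Y_k\mapsto\sum_{(F,s)\in\mathcal{F}}C_{F,s,k}X_{F,s}$, where the coefficients come from $s(\ell_F)-\ell_F=\sum_{k\in K}C_{F,s,k}\ell_{H_k}$ via \cref{thm:fundamental chamber}.$(3)$.
\end{itemize}
With this choice, each linear relation $\sum_{F}\ip{u,\ell_F}X_{F,e}+\sum_k\ip{u,\ell_{H_k}}Y_k$ maps exactly to the linear relation $\sum_{(F,s)}\ip{u,s(\ell_F)}X_{F,s}$ of $P_\lambda$.

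The map exists only on ordinary cohomology. In $\mathcal{SR}(P_\lambda)$ the image of $X_{F,e}Y_k$ with $F\cap H_k=\emptyset$ need not vanish; in the $A_1$ case it is $2X_{F,r}^2$. The rest of the paper's argument is:
\begin{itemize}
\item The Stanley--Reisner relations die, because non-degeneracy gives $C_{F',t,k}=0$ whenever $X_{F,e}X_{F',t}\neq0$ and $F\cap H_k=\emptyset$.
\item Surjectivity holds because orbit sums generate $\mathcal{SR}(P_\lambda)^{W_K}$, using $s(F)\cap F\in\{F,\emptyset\}$.
\item Injectivity follows from Poincar\'e duality of $H^*(X_{P_\lambda/W_K})$ together with $H^{2n}(X_{P_\lambda})^{W_K}=\Q$.
\end{itemize}
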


Our result in \cref{coho ring iso} relies on the fact that a toric variety can be constructed from a rational polytope (or, more generally, when the normal cones are rational) with respect to a lattice structure in $V$; see \cite[\S 5]{danilovGEOMETRYTORICVARIETIES1978}.
We also use the well-known description of the rational cohomology ring of a toric variety due to Danilov \cite{danilovGEOMETRYTORICVARIETIES1978} and Jurkiewicz \cite{jurkiewiczChowRingProjective1980}, which requires  the polytope to be $n$-dimensional and \textbf{simple} (i.e., each vertex lies in exactly $n$ facets).

We now put everything in a more general setting.
Let $M$ be a lattice lying between the root lattice and the weight lattice.
In terms of $M$,  we consider a full-dimensional  $W$-symmetric rational polytope $P_{\Lambda}$. Then  $P_{\Lambda}=\conv\left(W(\Lambda)\right)$ with $\Lambda$ a finite set of  vertices lying in $C_S$ and in the rational span of $M$.
We call $P_{\Lambda}$ \textbf{non-degenerate} if no vertex of $P_{\Lambda}$ lies on the boundary of $C_S$, and \textbf{degenerate} otherwise.
The quotient $P_{\Lambda}/W_K$ is identified with the polytope $P_{\Lambda}\cap C_K$ and is associated with the toric variety $X_{P_{\Lambda}/W_K}$.
One of the main results of this paper is the following.
\begin{theorem}\label{general coho ring iso}
  For a simple non-degenerate $W$-symmetric polytope $P_{\Lambda}$ and any parabolic subgroup $W_K$, there is an explicit ring isomorphism
  $$H^*(X_{P_{\Lambda}/W_K};\Q)\cong H^*(X_{P_{\Lambda}};\Q)^{W_K},$$
  which holds for any intermediate lattice $M$.
\end{theorem}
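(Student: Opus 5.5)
The plan is to use the Danilov--Jurkiewicz presentation of both cohomology rings and build the isomorphism at the level of generators. Let $P=P_{\Lambda}$ and $Q=P\cap C_K$. Both are full-dimensional simple polytopes: $P$ by hypothesis, and $Q$ because non-degeneracy forces the walls $\alpha^{\perp}$, $\alpha\in K$, to cut $P$ transversally. Then
\[
H^*(X_P;\Q)\cong \Q[x_F : F \text{ facet of } P]/(I_P+J_P),
\]
where $I_P$ is the Stanley--Reisner ideal and $J_P$ is generated by the linear forms $\sum_F \ip{u,\nu_F}x_F$ for $u\in V$. Here $\nu_F$ are the primitive inward normals in the dual lattice of $M$; rationally, rescaling them does not change the ring. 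The facets of $Q$ are of two kinds: the pieces $F\cap C_K$ of facets $F$ of $P$ meeting the interior of $C_K$, and the wall facets $Q\cap\alpha^{\perp}$ for $\alpha\in K$. The first step is to check that the wall facets are genuinely present and that every other facet of $Q$ is of the first kind. I would do this using non-degeneracy, which guarantees that no vertex of $P$ lies on any $\alpha^{\perp}$.

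Next I define a ring map $\phi:\Q[x_G : G \text{ facet of } Q]\to H^*(X_P;\Q)^{W_K}$. On a non-wall facet $G=F\cap C_K$, set
\[
\phi(x_G)=\sum_{F'\in W_K F} x_{F'}.
\]
The wall variables are then forced by the linear relations. For $u\in V^{W_K}$ the relation of $Q$ involves only non-wall variables, since $u\perp\alpha$ for $\alpha\in K$, and it maps to the $W_K$-averaged linear relation of $P$. For $u$ dual to the coroots of $K$, the relation expresses $x_{\alpha^\perp}$ as a combination of the non-wall variables; I define $\phi(x_{\alpha^\perp})$ to be the image of that combination. I must then check that $\phi$ kills $I_Q$. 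A set of facets of $Q$ with empty intersection lifts to a configuration whose $W_K$-translates pairwise give Stanley--Reisner monomials, or else is killed after multiplying with a wall class. I expect this to need a local analysis at the vertices of $Q$ lying on walls.

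For bijectivity I compare dimensions rather than invert $\phi$ directly. Rationally, $\dim H^{2k}(X_P)^{W_K}$ equals the $W_K$-invariant part of the $h$-vector representation, computed by averaging traces or via Stembridge-type formulas. Meanwhile $\dim H^{2k}(X_Q)=h_k(Q)$. Using a generic linear functional fixed by $W_K$ (a Morse/shelling argument on the vertices of $P$), one sees that the $W_K$-orbits of vertices of $P$ of index $k$ biject with the vertices of $Q$ of index $k$, after suitable adjustment along walls. This gives equal Betti numbers. It then suffices to prove that $\phi$ is surjective, for which I would show that the orbit sums $\sum_{F'\in W_KF}x_{F'}$ together with the invariant linear forms generate the invariants. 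Since $|W_K|$ is invertible in $\Q$, the Reynolds operator applied to monomials gives a generating set, and each averaged monomial is reduced to products of orbit sums by induction on degree using the Stanley--Reisner relations.

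The main obstacle is the surjectivity/generation step, equivalently well-definedness on $I_Q$. Averaged monomials are not obviously products of orbit sums, and the wall classes must absorb the discrepancy. My first attempt would be to work in equivariant cohomology: use $H_T^*(X_P)^{W_K}$ as piecewise polynomials on the fan, with $W_K$-invariants equal to piecewise polynomials on the $W_K$-fundamental domain. That domain is exactly the normal fan of $Q$ away from the walls, and non-degeneracy ensures no extra cones appear. Passing to the quotient by $H^{>0}(BT)$ then yields the isomorphism. Independence of the intermediate lattice $M$ is automatic, since all constructions are rational.
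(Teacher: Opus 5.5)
Your map agrees with the paper's map. Using the relations in $J_P$ and the expansion $s(\ell_F)-\ell_F=\sum_{k\in K}C_{F,s,k}\ell_{H_k}$, the wall class you get by solving the linear relation of $P/W_K$ equals $\sum_{(F,s)\in\mathcal{F}}C_{F,s,k}X_{F,s}$. Your check that the choice of $u$ does not matter is also fine.

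\textbf{The bijectivity step is a genuine gap.} A linear functional fixed by $W_K$ has the form $\ip{v,-}$ with $v\perp\alpha_k$ for all $k\in K$.
For $K=S$ this forces $v=0$. In general $\ip{v,r_k(x)}=\ip{v,x}$, so the functional is constant on every edge of $P$ crossing a wall $H_k$. Such edges exist whenever $K\neq\emptyset$, because the vertices of $P\cap H_k$ lie on them. So the functional is never generic, and the Morse/shelling count of $h$-vectors does not apply. ``Averaging traces'' would need the character of $W_K$ on $H^*(X_P)$, which is not available for a general $P_{\Lambda}$; Stembridge's results concern $P_\lambda$.

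The paper needs no Betti-number comparison:
\begin{itemize}
\item $H^*(X_{P/W_K})$ is a Poincar\'e duality algebra.
\item $H^{2n}(X_P)^{W_K}=\Q$ because $W_K$ preserves orientation.
\item Hence a surjective $\phi$ is an isomorphism in degree $2n$.
\item Non-degeneracy of the pairing $H^{2i}\times H^{2n-2i}\to H^{2n}$ then forces $\phi$ to be injective in every degree.
\end{itemize}
Your proposal is missing an argument of this kind.

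\textbf{The other two steps are left open.} Both follow quickly from non-degeneracy, but you do not identify the facts needed.

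For surjectivity, no wall correction is needed. A facet of $P$ meeting $C_K$ has its barycenter in $C_K$, since a wall meeting a facet must fix it. So for $F\in\mathcal{F}_K$ and $s\in W_K$, the translate $s(F)$ is either $F$ or disjoint from $F$. Hence, for distinct $F_i\in\mathcal{F}_K$ with $\bigcap_i F_i\neq\emptyset$, the product $\prod_i\bigl(\sum_t X_{F_i,t}\bigr)^{n_i}$ equals, modulo $I_P$, the $W_K$-orbit sum of $\prod_i X_{F_i,e}^{n_i}$. Induction on the number of monomials in an invariant representative then finishes.

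For $\phi(I_K)=0$, first reduce to two kinds of generators. The reduction works because a wall $H_k$ meeting each of $F_1,\dots,F_p$ fixes each of them, and so passes through the barycenter of $\bigcap_i F_i$.
\begin{itemize}
\item Generators $\prod_i X_{F_i,e}$ with $\bigcap_i F_i=\emptyset$. A nonzero summand of the image would give a nonempty face $\bigcap_i s_i(F_i)$, which is $W_K$-conjugate to $\bigcap_i F_i=\emptyset$.
\item Generators $X_{F,e}Y_k$ with $F\cap H_k=\emptyset$. Here the explicit representative above is essential. If $t(F')\cap F\neq\emptyset$, then $t(\ell_{F'})=w^{-1}(\ell_{F'})$ for some $w\in W_F$. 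The group $W_F$ is generated by the $r_j$ with $H_j\cap F\neq\emptyset$. By part (3) of the fundamental chamber theorem, $t(\ell_{F'})-\ell_{F'}$ has no $\ell_{H_k}$-component, so $C_{F',t,k}=0$.
\end{itemize}
Your ``local analysis at wall vertices'' does not supply this.

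Your equivariant fallback is not a proof as stated. First, $W_K$ acts nontrivially on $H^*(BT)$, so taking invariants does not simply commute with killing $H^{>0}(BT)$. Second, the normal fan of $P/W_K$ contains the new rays $\ell_{H_k}$, so it is not the restriction of the fan of $P$ to a chamber.
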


See \cref{eq:base map} for the explicit description of the isomorphism in \cref{general coho ring iso}, and \cref{prop:indep lattice} for its independence of the choice of $M$.
By taking $\Lambda$ to be a singleton and $M$ to be the root lattice, one obtains \cref{coho ring iso} as a special case of \cref{general coho ring iso}.
\begin{remark}
  In fact, $X_{P_{\Lambda}}/W_K$ and $X_{P_{\Lambda}/W_K}$ in \cref{general coho ring iso} are equivalent in certain respects.
  Blume \cite{blumeToricOrbifoldsAssociated2015} proved a variety isomorphism in the case where $R$ is of type $A$, $B$, or $C$, $\Lambda$ is a singleton, $M$ is the root lattice, and $W_K = W$.
  The author \cite{gongHomotopyTypesToric2024} established a homotopy equivalence.
  Crowley--G.--Simpson \cite{crowleyToricVarietiesModulo2024} proved a variety isomorphism that is also valid when $P_{\Lambda}$ is neither non-degenerate nor simple.
  However, none of these works described the explicit behavior of these equivalences on cohomology.
\end{remark}

More generally, we  consider a reduced root system $R$ that is not necessarily crystallographic. 
In this case, the Weyl group $W$ of $R$ may fail to preserve a lattice, and the $W$-symmetric polytope $P_{\Lambda}$ need not be rational; see, for example, \cite[Proposition 2.8]{humphreysReflectionGroupsCoxeter1992}.
However, the objects $S$, $K$, $W$, $W_K$, $P_{\Lambda}$, $C_K$, and $P_{\Lambda}/W_K$ can all be defined or chosen without reference to a lattice structure.
We then modify the construction in \cite[p.~248]{stembridgePermutationRepresentationsWeyl1994} to define a special graded polytopal algebra $\mathcal{A}(P_{\Lambda})$ over $\R$ (see \cref{section:polytopal algebra} for details).
 Roughly speaking, $\mathcal{A}(P_{\Lambda})$ is the quotient of the Stanley--Reisner ring of $P_{\Lambda}$ by the ideal induced by the normal vectors, where we take the normal vectors to lie on the unit sphere.

More generally, we can consider a reduced root system $R$ that is not necessarily crystallographic. 
In this case, the Weyl group $W$ of $R$ may fail to preserve a lattice, and the $W$-symmetric polytope $P_{\Lambda}$ need not be rational; see, for example, \cite[Proposition 2.8]{humphreysReflectionGroupsCoxeter1992}.
However, the objects $S$, $K$, $W$, $W_K$, $P_{\Lambda}$, $C_K$, and $P_{\Lambda}/W_K$ can all be defined or chosen without reference to a lattice structure.
We then modify the construction in \cite[p.~248]{stembridgePermutationRepresentationsWeyl1994} to define a special graded polytopal algebra $\mathcal{A}(P_{\Lambda})$ over $\R$ (see \cref{section:polytopal algebra} for details).
 Roughly speaking, $\mathcal{A}(P_{\Lambda})$ is the quotient of the Stanley--Reisner ring of $P_{\Lambda}$ by the ideal induced by the normal vectors, where we take the normal vectors to lie on the unit sphere.
This polytopal algebra $\mathcal{A}(P_{\Lambda})$ is intended as an analogue of the cohomology ring of a toric variety, although at present it may not have topological counterpart.

\begin{theorem}\label{general ana ring iso}
  For a simple non-degenerate $W$-symmetric polytope $P_{\Lambda}$ and any parabolic subgroup $W_K$, there is an explicit ring isomorphism
  $$\mathcal{A}(P_{\Lambda}/W_K)\cong \mathcal{A}(P_{\Lambda})^{W_K}.$$
\end{theorem}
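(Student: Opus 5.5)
We construct an explicit map of Stanley--Reisner type and check that it is well defined, $W_K$-invariant and bijective. First fix notation. The facets of $P_{\Lambda}$ are indexed by their unit outward normals $u_F$, and $W$ permutes them. Since $P_{\Lambda}$ is non-degenerate, no facet normal of $P_{\Lambda}$ is orthogonal to a root: a facet normal lying on a wall would force a vertex of the corresponding facet onto the boundary of $C_S$.

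The facets of $Q=P_{\Lambda}\cap C_K$ are of two kinds:
\begin{itemize}
\item the \emph{wall facets} $Q\cap\alpha^\perp$ for $\alpha\in K$, with unit normal $-\alpha/|\alpha|$;
\item the \emph{old facets} $F\cap C_K$, where $F$ is a facet of $P_{\Lambda}$ whose normal $u_F$ lies in the interior of $C_K$.
\end{itemize}
Each $W_K$-orbit of facets of $P_{\Lambda}$ contains exactly one such $F$, because the interior of $C_K$ is a fundamental domain for $W_K$ and $u_F$ is off the walls. Simplicity of $Q$ follows from non-degeneracy, so $\mathcal{A}(Q)$ is defined.

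Define a ring map $\phi:\R[x_G : G \text{ facet of } Q]\to \mathcal{A}(P_{\Lambda})^{W_K}$ on generators.
\begin{itemize}
\item For an old facet $F\cap C_K$, send $x_{F\cap C_K}$ to the orbit sum $\sum_{w\in W_K/\mathrm{Stab}(F)} x_{wF}$. The stabilizer is trivial by non-degeneracy, so this is the plain sum over $W_K$.
\item For a wall facet $\alpha\in K$, send $x_\alpha$ to the element forced by the linear relations. In $\mathcal{A}(Q)$ we need, for each $v\in V$,
\[
\sum_{F \text{ old}}\ip{v,u_F}\,x_{F\cap C_K}-\sum_{\alpha\in K}\ip{v,\alpha/|\alpha|}\,x_\alpha=0 .
\]
In $\mathcal{A}(P_{\Lambda})$ we have $\sum_{\text{all }G}\ip{v,u_G}x_G=0$. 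Using this, we solve for the images of the $x_\alpha$ as explicit degree-one combinations of the $x_G$, of the form $\sum_G c_{\alpha}(G)\,x_G$.
\end{itemize}
This is the map referred to as \cref{eq:base map}.

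Well-definedness has two parts.
\begin{itemize}
\item \textbf{Linear relations.} They hold by construction.
\item \textbf{Stanley--Reisner relations.} If a set of facets of $Q$ has empty intersection, the product of the images must vanish. For old facets this holds because disjointness in $C_K$ lifts to disjointness in $P_{\Lambda}$ of every choice of $W_K$-translates. That lifting uses the fact that $P_{\Lambda}$ is the union of the $w(Q)$, together with a folding argument. Mixed products involving wall variables must be handled using the explicit formula for $\phi(x_\alpha)$.
\end{itemize}

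Bijectivity is where I expect the main obstacle. The algebras are finite-dimensional, so I would compare dimensions and prove surjectivity.
\begin{itemize}
\item \textbf{Dimensions.} $\dim\mathcal{A}(P_{\Lambda})_k=h_k(P_{\Lambda})$. By averaging, $\dim \mathcal{A}(P_{\Lambda})^{W_K}$ is computed from the $W_K$-character. Following Stembridge, the $W$-module $\mathcal{A}(P_{\Lambda})$ admits a filtration (via a generic linear functional and shelling) whose graded pieces are permutation modules on the vertices with a given descent/$h$-statistic. Hence the invariant dimension counts $W_K$-orbits of vertices of $P_{\Lambda}$ with a given index. Every vertex orbit meets $C_K$ in exactly one vertex of $Q$, and the $h$-statistics match, giving $h_k(Q)$. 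The functional must be chosen $W_K$-compatibly, for instance generic inside $C_K$.
\item \textbf{Surjectivity.} The invariants are spanned by $W_K$-symmetrizations of monomials supported on faces. A face of $P_{\Lambda}$ can be moved by $W_K$ to meet $C_K$. Such symmetrized monomials are polynomials in the orbit sums modulo lower-order terms, by induction on degree; then a triangularity argument in the face filtration gives surjectivity.
\end{itemize}

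If the dimension count proves delicate, an alternative is to show that $\phi$ respects the Poincaré-duality pairing. Both algebras are Poincaré duality algebras. The top-degree class of $Q$ corresponds to $|W_K|^{-1}$ times that of $P_{\Lambda}$, by comparing normalized volumes and vertex contributions. Since $\mathcal{A}(Q)$ satisfies Poincaré duality, its socle is spanned by the top class; a nonzero top class therefore forces $\phi$ to be injective, and equality of dimensions then finishes the proof.
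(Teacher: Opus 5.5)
Your overall architecture is the paper's: orbit sums for the non-wall facets, wall generators forced by the linear relations, killing $I_K$, then surjectivity plus Poincar\'e duality. But it rests on a false claim. Non-degeneracy says that no \emph{vertex} lies on a wall; it does not keep facet normals off the walls. If $\ip{\ell_F,\alpha}=0$, then $r_\alpha(F)=F$ and the vertices of $F$ simply pair off as $\{v,r_\alpha v\}$, so your inference fails. Conversely, every facet meeting $H_\alpha$ is $r_\alpha$-stable with normal orthogonal to $\alpha$ (\cref{lemma:intersection HF}).

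In the $A_2$ hexagon of \cref{example: coho ring A_2}, $\ip{\ell_{E_0},\alpha_2}=0$, $r_2(E_0)=E_0$ and $W(E_0)=\{E_0,E_2,E_4\}$, so facet stabilizers are not trivial. Moreover, no facet normal lies in the interior of $C_S$. Your recipe therefore produces no old facets at all, whereas $P_\lambda/W$ is a quadrilateral with old edges $E_0\cap C_S$ and $E_1\cap C_S$. The correct index set is $\mathcal{F}_K$, the facets whose barycenter (equivalently, normal) lies in the \emph{closed} chamber $C_K$. Their images are $\sum_{s\in W_K/W_F}X_{F,s}$, where $W_F$ is the parabolic subgroup generated by the $r_k$, $k\in K$, with $H_k\cap F\neq\emptyset$ (\cref{lem:stabilizer}).

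These stabilizers are exactly what makes the step you left open go through. That step is the mixed relations $X_{F,e}Y_k$ with $F\cap H_k=\emptyset$, which by \cref{lem:I_K generators} are the only generators of $I_K$ besides pure $X$-monomials. The paper writes $s(\ell_F)-\ell_F=\sum_{k\in K}C_{F,s,k}\ell_{H_k}$ and sends $Y_k\mapsto\sum_{(F,s)\in\mathcal{F}}C_{F,s,k}X_{F,s}$. This image is $W_K$-invariant: use the relation of $J_P$ for a $u$ with $\ip{u,\ell_{H_{k'}}}=0$ for $k'\neq k$ and $\ip{u,\ell_{H_k}}\neq 0$. Now suppose $X_{F,e}X_{F',t}\neq 0$. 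Then some $w\in W_F$ has $wt\in W_{F'}$, so $t(\ell_{F'})-\ell_{F'}=w^{-1}(\ell_{F'})-\ell_{F'}$ lies in the span of the $\ell_{H_j}$ with $H_j\cap F\neq\emptyset$, forcing $C_{F',t,k}=0$. Without this argument and the invariance check, well-definedness of your $\phi$ is not established.

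On bijectivity, the dimension count does not work as proposed. A functional separating the vertices cannot be $W_K$-invariant, since it would have to agree on $v$ and $r_kv$. So the shelling filtration is not $W_K$-stable, and its pieces are not permutation modules on $W_K$-orbits of vertices. In fact, whether $\mathcal{A}(P_\Lambda)$ is a permutation representation in this generality is left open at the end of the paper; Stembridge's theorem covers only $P_\lambda$ with $R$ crystallographic.

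Your fallback is the right route and is the paper's: surjectivity plus \cref{lem:sur is iso}.
\begin{itemize}
\item \textbf{Surjectivity.} This is sharper than ``modulo lower-order terms''. By \cref{lem:stabilizer}$(2)$, $t(F)\cap w(F)\neq\emptyset$ forces $t=w$ in $W_K/W_F$. Hence a symmetrized face monomial equals the product of the corresponding orbit sums exactly modulo $I_P$, and one inducts on the number of monomials (\cref{lem:image}).
\item \textbf{Top degree.} No volume comparison is needed. A vertex $v\in C_K$ of $P$ lies in the interior of $C_K$, and its $n$ facets $F_1,\dots,F_n$ lie in $\mathcal{F}_K$. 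Your $\phi$ sends $X_{F_1,e}\cdots X_{F_n,e}$ to $\sum_{t\in W_K}t(X_{F_1,e}\cdots X_{F_n,e})$. This is nonzero because all vertex monomials are positive multiples of one nonzero class (\cref{lem:vertex generate same}).
\end{itemize}
So $\phi$ is an isomorphism in degree $n$, and Poincar\'e duality of $\mathcal{A}(P/W_K)$ then gives injectivity.
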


See \cref{eq:general iso} for the explicit description of the isomorphism in \cref{general ana ring iso}.
By taking $R$ to be crystallographic and $M$ to be an intermediate lattice, one obtains \cref{general coho ring iso} as a special case of \cref{general ana ring iso}, with the help of \cref{prop:independent model}.

There is no straightforward generalization of the result in \cref{general ana ring iso} to degenerate $W$-symmetric polytopes, since their face structures are more difficult to describe.
However, this difficulty does not arise in dimension two. 
This generalization is proved in \cref{generalization to polygons}, and it recovers Song's result in \cite{songToricSurfacesReflection2022} when $R$ is crystallographic.

\begin{remark}
  One should distinguish the polytopal algebra from the \textbf{polytope algebra}  defined by McMullen \cite{mcmullenPolytopeAlgebra1989}, which is generated by all polytopes in $V$ under union and Minkowski sum, subject to certain relations.  This polytope algebra  is isomorphic to the completion of the polytopal algebras of all polytopes in $V$ with integral constant terms; see \cite{brionStructurePolytopeAlgebra1997}.
\end{remark}

Our constructions of the isomorphisms are motivated by the work in \cite{horiguchiToricOrbifoldsAssociated2024}.
While they treated the classical types separately, we provide a type-uniform proof.
\cite{guiWeylGroupSymmetries2025} also provided a uniform proof of their result, which instead depended on the face structure of $P/W$, and introduced a slightly different polytopal algebra.
Our work does not require special face structures and provides more details of the polytopal algebra.

In \cref{section:faces}, we rewiew the face structures of the involved polytopes and their Stanley--Reisner rings.
In \cref{section:variety coho}, we review the rational cohomology rings of toric varieties.
In \cref{section:proofs coho}, we prove \cref{general coho ring iso}.
In \cref{section:polytopal algebra}, we construct the  polytopal algebra $\mathcal{A}(P_{\Lambda})$ and prove \cref{general ana ring iso}.
In \cref{section:generalization}, we discuss questions concerning isomorphisms and representations for $\mathcal{A}(P_{\Lambda})$ when $P_{\Lambda}$ is degenerate.

{\subsection*{Acknowledgements} Thanks to Matthias Franz for much help, to Tao Gui for discussion and proofreading, to Kumar Shukla for discussion.}

\section{Face structures \& Stanley--Reisner rings}\label{section:faces}

In this section, we consider a reduced root system $R$ in the Euclidean space $V$, not necessarily crystallographic.
We describe the face structures of a full-dimensional $W$-symmetric polytope $P_{\Lambda}$ and its quotient polytope $P_{\Lambda}/W_K$.
The non-degenerate case when $\Lambda$ is a singleton (i.e., $P_{\Lambda}=P_{\lambda}$) was studied in \cite[\S\S 3,5]{gongHomotopyTypesToric2024}, which we follow and generalize here.
We also discuss the Stanley--Reisner rings of these polytopes.
Background on root systems and Weyl groups can be found in \cite[Part I]{humphreysReflectionGroupsCoxeter1992}.
When there is no ambiguity, we write $P$ instead of $P_{\Lambda}$.

Recall that $S$ consists of $n$ simple roots $\alpha_1,\alpha_2,\dots,\alpha_n$, and that $K$ is a subset of $S$.
We identify $S$ with the set $\{1,2,\dots,n\}$ and $K$ with the corresponding subset.
Each root $\alpha\in R$ defines a reflection $r_{\alpha}\in O(V)$ given by 
$$r_{\alpha}(x)=x-\frac{2\ip{x,\alpha}}{\ip{\alpha,\alpha}}\alpha.$$ 
The reflection $r_{\alpha}$ fixes a hyperplane pointwise, which we denote by $H_{\alpha}$.
For a simple root $\alpha_i \in S$, we write $r_i = r_{\alpha_i}$ and $H_i = H_{\alpha_i}$.
The parabolic subgroup $W_K$ is generated by the simple reflections $\{r_k:\, k \in K\}$.
In particular, $W_S = W$. We denote the identity element of $W$ by $e$.

Define the (closed) \textbf{fundamental chamber}
\begin{align*}
  C_K:=\left\{x\in V:\,\ip{x,\alpha_k}\ge 0,\,\, \forall k\in K\right\}.
\end{align*}
The fundamental chamber satisfies several important properties, summarized in the following theorem.
\begin{theorem}[{\cite[\S 1.12]{humphreysReflectionGroupsCoxeter1992}}]\label{thm:fundamental chamber}\phantom{}
  \begin{itemize}
    \item[$(1)$] The composite $C_K\hookrightarrow V\to V/W_K$ is a homeomorphism.
    \item[$(2)$] The stabilizer of $x \in C_K$ is generated by the simple reflections $r_k$ for which $r_k(x) = x$.
    \item[$(3)$] For $s \in W_K$ and $x \in C_K$, the difference $s(x) - x$ lies in the non-positive span of the simple roots corresponding to the minimal parabolic subgroup of $W_K$ containing $s$.
  \end{itemize}
\end{theorem}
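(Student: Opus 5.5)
This is a standard result cited from Humphreys, so I will not build new machinery; I will reduce to the case of the full group $W$ acting on the fundamental chamber $C=C_S$. The first reduction is to observe that $W_K$ is itself the Weyl group of the root subsystem $R_K=R\cap\spn\{\alpha_k: k\in K\}$, with simple system $K$. However, $C_K$ is not a chamber of $V$ when $K\neq S$: it is the product of the fundamental chamber of $R_K$ in $\spn K$ with the orthogonal complement $(\spn K)^{\perp}$, on which $W_K$ acts trivially. So every statement for $(W_K,C_K)$ follows from the corresponding statement for $(W,C_S)$ applied to $R_K$, together with the trivial factor. It therefore suffices to treat $K=S$.

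For (1), I would proceed in three steps.
\begin{itemize}
  \item \emph{Surjectivity.} For $x\in V$, choose $w\in W$ maximizing $\ip{w(x),\rho}$, where $\rho$ is a fixed vector in the interior of $C$, for instance the half-sum of the positive roots. Since $r_i(\rho)=\rho-c\,\alpha_i$ with $c>0$, maximality forces $\ip{w(x),\alpha_i}\ge 0$ for all $i$, so $w(x)\in C$.
  \item \emph{Injectivity.} I would use the standard length induction. If $x,y\in C$ and $w(x)=y$ with $\ell(w)>0$, pick $i$ with $\ell(wr_i)<\ell(w)$. Then $w(\alpha_i)<0$, so $0\le\ip{y,w(\alpha_i)}=\ip{x,\alpha_i}\le 0$. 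Hence $r_i(x)=x$, and we replace $w$ by $wr_i$. Induction gives $x=y$. This argument also shows that $w$ is a product of simple reflections fixing $x$, which is exactly (2).
  \item \emph{Homeomorphism.} The map $C\to V/W$ is a continuous bijection. It is closed because $W$ is finite and $C$ is closed, so it is a homeomorphism.
\end{itemize}

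For (3), I would induct on $\ell(s)$ inside the minimal parabolic subgroup $W_J$ containing $s$, noting that any reduced word for $s$ uses exactly the letters in $J$. Write $s=r_j s'$ with $\ell(s')<\ell(s)$. Then
$$s(x)-x=\bigl(r_j(s'x)-s'x\bigr)+\bigl(s'x-x\bigr)=-\frac{2\ip{s'x,\alpha_j}}{\ip{\alpha_j,\alpha_j}}\,\alpha_j+\bigl(s'x-x\bigr).$$
We have $\ip{s'x,\alpha_j}=\ip{x,s'^{-1}\alpha_j}$, and $s'^{-1}\alpha_j$ is a positive root because $\ell(r_js')>\ell(s')$. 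Since $x\in C$, this inner product is non-negative, so the first term is a non-positive multiple of $\alpha_j$. The second term lies in the non-positive span of the simple roots of $J$ by induction.

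The main obstacle is the combinatorial input: the fact that $\ell(wr_i)<\ell(w)$ if and only if $w(\alpha_i)<0$, and the exchange/positivity facts this relies on. These I would take from Humphreys \S\S1.6--1.7 rather than reprove.
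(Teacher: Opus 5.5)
Your argument is correct and is essentially the standard proof from Humphreys, which the paper cites without giving a proof of its own: you reduce from $(W_K,C_K)$ to $(W,C_S)$, use the maximization argument for surjectivity and length induction for injectivity and $(2)$, and induct on length inside $W_J$ for $(3)$. One small slip is that the inequalities in your injectivity step are reversed: since $w(\alpha_i)$ is a negative root and $x,y\in C$, it should read $0\ge\ip{y,w(\alpha_i)}=\ip{x,\alpha_i}\ge 0$, which still gives $r_i(x)=x$.
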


Recall that $P=\conv\left(W(\Lambda)\right)$, where $\Lambda$ is a finite set of vertices lying in  $C_S$.  By \cref{thm:fundamental chamber}.$(1)$, we identify $P/W_K$ with the polytope $P\cap C_K$.
For a facet $F$ of $P$, $P/W_K$, or $C_S$, we denote by $\ell_F$ an outward-pointing normal vector of $F$.
In particular, for $i \in S$, the vector $\ell_{H_i}$ is a negative multiple of $\alpha_i$.

\begin{lemma}\label{lemma:intersection HF}
  Let $F$ be a face of $P$, and $r_{\alpha}\in W$ be the reflection for some $\alpha\in R$. The following conditions are equivalent:
  \begin{itemize}
    \item[$(1)$] The reflection $r_{\alpha}$ preserves $F$, that is, $r_{\alpha}(F)=F$.
    \item[$(2)$] The hyperplane $H_{\alpha}$ passes through the barycenter of $F$.
    \item[$(3)$] The hyperplane $H_{\alpha}$ passes through a relative interior point of $F$.
  \end{itemize}
 When $P$ is non-degenerate, there is an additional equivalent condition:
  \begin{itemize}
    \item[$(4)$] The face $F$ intersects the hyperplane $H_{\alpha}$.
  \end{itemize}
 Under this condition, we have  $\ip{\ell_{F'},\alpha}=0$ for any facet $F'$ of $P$ containing $F$.
\end{lemma}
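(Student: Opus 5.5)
I will prove the cycle $(1)\Rightarrow(2)\Rightarrow(3)\Rightarrow(1)$ for an arbitrary face $F$, then add $(4)$ in the non-degenerate case, and finally derive the statement about facets.

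\emph{Step 1: $(1)\Rightarrow(2)$.} The barycenter $b_F$ of $F$ is the average of the vertices of $F$. If $r_\alpha(F)=F$, then $r_\alpha$ permutes the vertices of $F$, so it fixes $b_F$. Hence $b_F\in H_\alpha$.

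\emph{Step 2: $(2)\Rightarrow(3)$.} This is immediate, since the barycenter lies in the relative interior of $F$.

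\emph{Step 3: $(3)\Rightarrow(1)$.} Since $r_\alpha\in W$ preserves $P$, the set $r_\alpha(F)$ is again a face of $P$. Let $x\in\operatorname{relint}F\cap H_\alpha$. Then $x=r_\alpha(x)\in\operatorname{relint} r_\alpha(F)$. The relative interiors of the distinct faces of a polytope partition it, so $r_\alpha(F)=F$.

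\emph{Step 4: $(4)\Rightarrow(3)$ for non-degenerate $P$.} Trivially $(3)\Rightarrow(4)$, so this is the only remaining implication, and I expect it to be the main obstacle. Suppose $x\in F\cap H_\alpha$. Let $G$ be the smallest face of $F$ containing $x$, so $x\in\operatorname{relint}G$. By Step 3 applied to $G$, we get $r_\alpha(G)=G$.

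I then need to upgrade this from $G$ to $F$. The plan is to show that every face $G$ with $r_\alpha(G)=G$ has its whole affine span, and hence every face containing it, meeting $H_\alpha$ in relative interiors. Equivalently, I want to rule out a face $F$ that meets $H_\alpha$ only along a proper face.

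Here non-degeneracy enters. Every vertex of $P$ is $w(\mu)$ with $\mu\in\Lambda$ in the interior of $C_S$, so no vertex lies on any reflecting hyperplane. Hence the vertices of $F$ split into those with $\langle v,\alpha\rangle>0$ and those with $\langle v,\alpha\rangle<0$.

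Suppose $F$ meets $H_\alpha$ but $r_\alpha(F)\neq F$. Take a supporting linear functional $\ell$ with $F=\{\ell=\max_P \ell\}$. The functional $\ell\circ r_\alpha$ defines $r_\alpha(F)$. Since $F\cap r_\alpha(F)\supseteq F\cap H_\alpha\neq\emptyset$, the common face $F\cap r_\alpha(F)$ is defined by $\ell+\ell\circ r_\alpha$, which is $r_\alpha$-invariant.

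Pick a vertex $v$ of $F$ with $\langle v,\alpha\rangle\neq 0$, chosen on the side where $v$ and $r_\alpha(v)$ both give maximal $\ell$. I then need to compare $v$ and $r_\alpha(v)=v-c\alpha$ with $c\neq0$. The intended argument uses that the edge directions at $v$ include a multiple of $\alpha$: for $v=w(\mu)$ with $\mu$ regular, the segment $[v,r_\alpha v]$ and the local cone structure at $v$ force $r_\alpha(v)\in F$ whenever $F$ contains points of $H_\alpha$. This would give $r_\alpha(F)\cap F$ containing a vertex pair exchanged by $r_\alpha$, and the barycenter of these vertices lies in $F\cap H_\alpha$. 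Iterating over the vertices of $F$ yields $r_\alpha(F)=F$.

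If this local argument is messy, my fallback is to reduce to $F$ containing a vertex $v=\mu$ in the open chamber, so that $\alpha$ can be taken positive. The key use of non-degeneracy is that $\mu$ regular makes each vertex have a neighbour $r_i\mu$ along the direction $\alpha_i$. I would then use \cref{thm:fundamental chamber}(3) to compare $\langle \ell_F, s\mu-\mu\rangle$ and conclude.

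\emph{Step 5: the facet statement.} Let $F'\supseteq F$ be a facet. Under $(4)$, $F'$ also meets $H_\alpha$, so by the equivalence applied to $F'$ we get $r_\alpha(F')=F'$. A facet has a unique outward unit normal, so $r_\alpha(\ell_{F'})=\ell_{F'}$. This gives $\ip{\ell_{F'},\alpha}=0$.
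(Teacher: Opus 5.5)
Your Steps 1--3 and Step 5 are correct and agree with the paper. Step 4, however, is not proved, and it is the implication $(4)\Rightarrow(1)$ for non-degenerate $P$ on which Step 5 also relies.

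The functional argument becomes circular where you choose $v$ so that both $v$ and $r_\alpha(v)$ maximize $\ell$. That choice already asserts $r_\alpha(v)\in F$, which is the point in question, and the appeal to the ``local cone structure'' at $v$ is never carried out.

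The fallback rests on a false claim: for a general non-degenerate $P_\Lambda$, a vertex $\mu\in\Lambda$ need not be joined by an edge to $r_i(\mu)$. Take $W$ of type $A_1\times A_1$ acting on $\R^2$ by sign changes of the coordinates, and $\Lambda=\{(1,3),(3,1)\}$. Then $P_\Lambda$ is an octagon in which $(1,3)$ is adjacent to $(-1,3)$ and $(3,1)$, but not to $r_2(1,3)=(1,-3)$.

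What is missing is an elementary convexity observation. You already had its ingredient when you noted that no vertex of $F$ lies on $H_\alpha$.
\begin{itemize}
\item A point $x\in F\cap H_\alpha$ is a convex combination of vertices of $F$. None of them satisfies $\langle v,\alpha\rangle=0$, so $F$ must have vertices with $\langle v,\alpha\rangle>0$ and vertices with $\langle v,\alpha\rangle<0$.
\item Since $\mathrm{relint}\,F$ is convex and dense in $F$, it contains points strictly on both sides of $H_\alpha$.
\item The segment joining two such points lies in $\mathrm{relint}\,F$ and meets $H_\alpha$. This is $(3)$, and your Step 3 then gives $(1)$.
\end{itemize}
Equivalently: if $H_\alpha$ missed $\mathrm{relint}\,F$, it would be a supporting hyperplane of $F$. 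Then $F\cap H_\alpha$ would be a nonempty face of $F$ and hence contain a vertex of $P$, contradicting non-degeneracy.

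This is all that the paper's one-line justification ``since no vertex lies in $H_\alpha$'' requires. No edge-direction or chamber argument is needed, and with it your Step 5 goes through as written.
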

\begin{proof}
  The implications  $(1)\Rightarrow(2)\Rightarrow(3)$ are immediate. We need to show that $(3)\Rightarrow (1)$: the relative interior point where $F$ and $H_{\alpha}$ meet is also contained in the face $r_{\alpha}(F)$. It follows that $r_{\alpha}(F) = F$.

  When $P$ is non-degenerate, it suffices to show $(4)\Rightarrow (1)$. Indeed, $F\cap r_{\alpha}(F)\cap H_{\alpha}\ne\emptyset$ implies that $r_{\alpha}(F)=F$ since no vertex lies in $H_{\alpha}$.

  The facet $F'$ intersects $H_i$. Then $r_{\alpha}(F')=F'$ and $\ell_{F'}=r_{\alpha}(\ell_{F'})=\ell_{F'}-\frac{2\ip{\ell_{F'},\alpha}}{\ip{\alpha,\alpha}}\alpha$.
  Thus, $\ip{\ell_{F'},\alpha}=0$.
  \qedhere
\end{proof}

\begin{remark}\label{remark:Vinberg}
  The equivalences $(1)\Leftrightarrow(2)\Leftrightarrow(3)$ in \cref{lemma:intersection HF} were stated for non-degenerate $P_{\lambda}$ in \cite[\S 3]{vinbergCERTAINCOMMUTATIVESUBALGEBRAS1991}.
\end{remark}

Let $\mathcal{F}_K$ denote the set of facets of $P$ that have barycenters lying in $C_K$. 
By \cref{thm:fundamental chamber}.(1), each facet of $P$ is $W_K$-conjugate to a unique facet in $\mathcal{F}_K$.
For a facet $F$ of $P$, let $W_F$ denote the stabilizer of $F$ under the $W_K$-action.
We then have the following result.

\begin{proposition}\label{prop:facets of polytopes}
  Define $\mathcal{F}:=\left\{(F,s):\, F\in\mathcal{F}_K,\, s\in{W_K}/{W_F}\right\}$.
  \begin{itemize}
    \item[$(1)$] There is a bijection from $\mathcal{F}_K\sqcup  K$ to the set of facets of $P/W_K$ given by 
     \begin{align*}
       F\in\mathcal{F}_K\mapsto F\cap C_K,\quad k\in K\mapsto H_k\cap P.
     \end{align*}
     \item[$(2)$] There is a bijection from $\mathcal{F}$ to the set of facets of $P$ given by 
     \begin{align*}
       (F,s)\mapsto s(F).
     \end{align*}
  \end{itemize}
\end{proposition}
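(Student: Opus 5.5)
Throughout I use that $0$ lies in the interior of $P$: it is $W$-fixed, hence is the average of the vertices of the $W$-stable polytope $P$, and $P$ is full-dimensional. In particular no reflecting hyperplane $H_\alpha$ contains the affine hull of a facet of $P$.

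\textbf{Part (2).} I would prove this first, since it is pure orbit bookkeeping.
\begin{itemize}
\item[] \emph{Surjectivity.} Every facet of $P$ is $W_K$-conjugate to a facet in $\mathcal{F}_K$; this was noted just before the proposition and follows from \cref{thm:fundamental chamber}.(1) applied to barycenters.
\item[] \emph{Well-definedness.} The facet $s(F)$ depends only on the coset $sW_F$.
\item[] \emph{Injectivity.} Suppose $s(F)=t(F')$ with $F,F'\in\mathcal{F}_K$. The barycenters of $F$ and $F'$ lie in $C_K$ and are $W_K$-conjugate, so they coincide by \cref{thm:fundamental chamber}.(1). Two facets of $P$ with the same barycenter are equal, so $F=F'$. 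Then $t^{-1}s\in W_F$, i.e.\ $sW_F=tW_F$.
\end{itemize}

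\textbf{Part (1), the facets $H_k\cap P$.} Write $P/W_K=P\cap C_K$ as the intersection of the facet half-spaces of $P$ with the half-spaces $\ip{x,\alpha_k}\ge 0$ for $k\in K$. Every facet of $P\cap C_K$ is then $(n-1)$-dimensional and lies either in some $H_k$ or in $\operatorname{aff}(F')$ for a facet $F'$ of $P$.

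For $k\in K$, the set $H_k\cap C_K$ is an $(n-1)$-dimensional cone with apex $0$. Since $0$ is interior to $P$, a neighbourhood of $0$ in this cone lies in $P$. Hence $H_k\cap P\cap C_K$ is a facet, and these facets are distinct for distinct $k$. None of them coincides with a facet of the form $F'\cap C_K$, because $\operatorname{aff}(F')$ does not pass through $0$.

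It remains to show: for a facet $F'$ of $P$, the set $F'\cap C_K$ is $(n-1)$-dimensional if and only if its barycenter $b$ lies in $C_K$. Injectivity on $\mathcal{F}_K$ is then clear, since distinct facets $F'$ have distinct affine hulls.

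\textbf{($\Rightarrow$).} Suppose $F'\cap C_K$ is $(n-1)$-dimensional. I can then pick $x$ in the relative interior of $F'$ with $\ip{x,\alpha_k}>0$ for all $k\in K$: a generic point of the $(n-1)$-dimensional set $F'\cap C_K$ avoids every $H_k$, since none of these contains $\operatorname{aff}(F')$. Assume for contradiction that $\ip{b,\alpha_k}<0$ for some $k$. The open segment from $x$ to $b$ lies in the relative interior of $F'$ and crosses $H_k$. By \cref{lemma:intersection HF} $(3)\Rightarrow(1)$, $r_k$ preserves $F'$ and hence fixes its barycenter, giving $\ip{b,\alpha_k}=0$, a contradiction.

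\textbf{($\Leftarrow$).} Suppose $b\in C_K$. Let $W_b$ be its stabilizer in $W_K$; by \cref{thm:fundamental chamber}.(2) it is generated by the $r_k$ with $\ip{b,\alpha_k}=0$. Each such $r_k$ preserves $F'$ by \cref{lemma:intersection HF}. Take a small $W_b$-stable ball $B$ around $b$ in $\operatorname{aff}(F')$, contained in the relative interior of $F'$. Near $b$, the cone $C_K$ agrees with $\{x:\ip{x,\alpha_k}\ge 0 \text{ for } \ip{b,\alpha_k}=0\}$, which is a fundamental domain for $W_b$. Hence $B$ is the finite union of the sets $w(B\cap C_K)$ for $w\in W_b$. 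So $B\cap C_K$ is $(n-1)$-dimensional, and $F'\cap C_K$ is a facet.

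\textbf{Main obstacle.} The delicate step is ($\Rightarrow$), which must work without assuming non-degeneracy. The segment argument handles this: it only uses \cref{lemma:intersection HF} $(3)\Rightarrow(1)$, which holds for arbitrary $P$.
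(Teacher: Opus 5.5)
Your proof is correct. The paper gives no written proof of this proposition. Its only justification is the sentence just before it: by \cref{thm:fundamental chamber}.(1) applied to barycenters, each facet of $P$ is $W_K$-conjugate to a unique facet in $\mathcal{F}_K$. That is exactly your argument for part (2).

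For part (1) the paper leaves everything implicit, and your argument is a sound way to fill it in. You describe the facets of $P\cap C_K$ through its defining half-spaces. You use $0\in\operatorname{int}P$ to see that each $H_k\cap P\cap C_K$ is a facet and differs from every $F'\cap C_K$. Finally, you show that $F'\cap C_K$ is $(n-1)$-dimensional exactly when the barycenter of $F'$ lies in $C_K$, via the implication $(3)\Rightarrow(1)$ of \cref{lemma:intersection HF}. It is worth noting that none of this uses non-degeneracy. That matters, since the paper later relies on the proposition for possibly degenerate $P$, for instance in the polygon case.

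Two cosmetic points:
\begin{itemize}
\item Your opening justification is phrased backwards. It is the vertex average of $P$ that is $W$-fixed, and it therefore equals $0$ because the simple roots span $V$, so $V^W=0$.
\item You are right to read the paper's ``$H_k\cap P$'' as the facet $H_k\cap P\cap C_K$ of $P/W_K$.
\end{itemize}
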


We next prove a lemma that helps to clarify the face structure of $P$.

\begin{lemma}\label{lem:stabilizer}
  Fix $F\in\mathcal{F}_K$.
  \begin{itemize}
    \item[$(1)$] The stabilizer $W_F$ is generated by all  simple reflections $r_k\in W_K$ that fix the barycenter of $F$.
    \item[$(2)$] If $P$ is non-degenerate, then for every $s \in W_K$, the intersection $s(F) \cap F$ is either $F$ or empty.
  \end{itemize}
\end{lemma}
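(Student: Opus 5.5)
Let $b$ denote the barycenter of $F$. Since $F\in\mathcal{F}_K$, the point $b$ lies in $C_K$. The whole lemma will be derived from \cref{lemma:intersection HF} together with \cref{thm:fundamental chamber}.

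For part (1), the plan is to show that $W_F$ equals the stabilizer of $b$ in $W_K$.

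First suppose $s\in W_K$ satisfies $s(F)=F$. Since $s$ is an affine isometry preserving $F$, it permutes the vertices of $F$, and hence fixes the barycenter $b$. Conversely, suppose $s(b)=b$. Then $s(F)$ is a face of $P$ whose barycenter is $b$, so $b$ is a relative interior point of both $F$ and $s(F)$. Distinct faces of a polytope have disjoint relative interiors, so $s(F)=F$.

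Thus $W_F=\operatorname{Stab}_{W_K}(b)$. We now apply \cref{thm:fundamental chamber}(2) to the group $W_K$, its simple system $K$, and its chamber $C_K\ni b$. This shows that the stabilizer is generated by the simple reflections $r_k$, $k\in K$, with $r_k(b)=b$. This is exactly statement (1). The only delicate point is that \cref{thm:fundamental chamber} is stated for $W_K$ acting with chamber $C_K$, so it applies directly.

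For part (2), assume $P$ is non-degenerate, and suppose $s\in W_K$ satisfies $s(F)\cap F\neq\emptyset$. We want to conclude that $s(F)=F$, i.e. $s\in W_F$. Choose a point $x\in s(F)\cap F$.

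The key step is to move $x$ into $C_K$ while keeping control of the faces. By \cref{thm:fundamental chamber}(1) there is $t\in W_K$ with $t(x)\in C_K$. This step is the main obstacle, because a naive induction on the length of $s$ requires care. I would try the following approach instead.

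Write $s$ as a reduced word in the generators $r_k$, $k\in K$. Since $F$ and $s(F)$ are both faces containing $x$, the face $F\cap s(F)$ is nonempty. Iterating reflections, one would like to show that $F$ meets $H_\alpha$ for each root $\alpha$ in the inversion set of $s$. Then \cref{lemma:intersection HF}(4) gives $r_\alpha(F)=F$ for each such $\alpha$, and composing these yields $s(F)=F$.

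A cleaner route uses barycenters. The faces $F$ and $s(F)$ are faces of $P$ with barycenters $b$ and $s(b)$, and both $b$ and $s(b)$ lie in the single $W_K$-orbit of $b$. The segment joining $x$ to the orbit is the natural object to study. Alternatively, the minimal face $G$ of $P$ containing $x$ lies in both $F$ and $s(F)$.

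I would then argue via convexity, using the fact that $P$ is $W_K$-stable. Consider the average of $x$ over the group $W_{F\cap s(F)}$, and apply \cref{thm:fundamental chamber}(3). This gives that $s(b)-b$ lies in the non-positive span of the simple roots in the minimal parabolic subgroup containing $s$. Combined with the outward normal of a facet $F'\supseteq F$, which satisfies $\ip{\ell_{F'},\alpha}=0$ for all roots $\alpha$ with $H_\alpha\cap F\neq\emptyset$ by \cref{lemma:intersection HF}, this forces $s(F)$ to lie in the supporting hyperplanes of $F$, and hence $s(F)=F$.

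I expect making this last deduction rigorous to be the hard part. Specifically, one must show that each simple root appearing in $s$ is orthogonal to $\ell_{F'}$, which requires knowing that $F$ meets the corresponding hyperplanes. Non-degeneracy is used precisely through \cref{lemma:intersection HF}(4).
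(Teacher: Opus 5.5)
Your part $(1)$ is correct and matches the paper's argument. Elements of $W_F$ fix the barycenter $b\in C_K$. The stabilizer of $b$ is generated by the $r_k$ ($k\in K$) fixing $b$, by \cref{thm:fundamental chamber}$(2)$. Each such $r_k$ preserves $F$; your relative-interior argument is exactly $(3)\Rightarrow(1)$ of \cref{lemma:intersection HF}.

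Part $(2)$, however, is not proved; it has a genuine gap. The alternatives you sketch are either unjustified or circular:
\begin{itemize}
\item You never show that $F$ meets $H_\alpha$ for every root $\alpha$ in the inversion set of $s$.
\item Showing that every simple reflection in a reduced word for $s$ preserves $F$ is equivalent to $s\in W_F$, which is the claim itself. This holds because $W_F$ is a standard parabolic subgroup by part $(1)$, and reduced words of its elements use only its generators.
\item The relation from \cref{thm:fundamental chamber}$(3)$ holds for every $s\in W_K$. So the hypothesis $s(F)\cap F\neq\emptyset$ would have to enter through exactly the orthogonality you say you cannot establish.
\end{itemize}

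The first step you abandoned is the right one, and it is not an obstacle. Take $x\in s(F)\cap F$ and $t\in W_K$ with $t(x)\in C_K$. Then $t(F)$ and $ts(F)$ are facets meeting $C_K$.

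The missing idea is this: for non-degenerate $P$, every facet $G$ meeting $C_K$ already lies in $\mathcal{F}_K$. Let $y\in G\cap C_K$ and let $b_G$ be the barycenter of $G$. Suppose $\ip{b_G,\alpha_k}<0$ for some $k\in K$. Since $\ip{y,\alpha_k}\ge 0$, the segment $[y,b_G]\subseteq G$ meets $H_k$. Then $r_k(G)=G$ by \cref{lemma:intersection HF}$(4)$, so $r_k(b_G)=b_G$ and $b_G\in H_k$, a contradiction. This is precisely where non-degeneracy is used.

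Hence $t(F)$ and $ts(F)$ both lie in $\mathcal{F}_K$. Each $W_K$-orbit of facets has a unique representative in $\mathcal{F}_K$, since barycenters lie in the fundamental domain $C_K$ of \cref{thm:fundamental chamber}$(1)$. Both facets are conjugate to $F\in\mathcal{F}_K$, so $t(F)=ts(F)=F$, and therefore $s(F)=F$.
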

\begin{proof}
  $(1)$. If $r\in W_K$ satisfies $r(F)=F$, then $r$ fixes the barycenter of $F$. The conclusion therefore follows from \cref{thm:fundamental chamber}.$(2)$ and \cref{lemma:intersection HF}.

  $(2)$.  Suppose that $s(F) \cap F \neq \emptyset$. Then there exists  $t\in W_K$ such that $ts(F)\cap t(F)\cap C_K\ne\emptyset$. By \cref{lemma:intersection HF}, both $t s(F)$ and $t(F)$ lie in $\mathcal{F}_K$. 
It follows from \cref{prop:facets of polytopes}.$(2)$ that $ts(F)=t(F)=F$, and hence $s(F)=F$.
\qedhere
\end{proof}

As an immediate consequence of \cref{prop:facets of polytopes} and \cref{lem:stabilizer}, every face of $P$ can be written as an intersection of distinct facets
$s_1(F_1)\cap s_2(F_2)\cap \cdots\cap s_p(F_p),$
 where $F_i \in \mathcal{F}_K$ and $s_i \in W_K/W_{F_i}$. There is no repetition among those $F_i$ satisfying $F_i \cap \partial C_K = \emptyset$. If $P$ is non-degenerate, then there is no repetition among any of the $F_i$, and the intersection is $W_K$-conjugate to
 $F_1\cap F_2\cap \cdots\cap F_p.$

A polytope is called \textbf{flag} if any collection of its facets has non-empty intersection whenever every pair of facets in the collection has non-empty intersection.

\begin{proposition}\label{lem:simple flag}
  Suppose that $P$ is non-degenerate.
  \begin{itemize}
    \item[$(1)$] The polytope $P$ is simple if and only if $P/W_K$ is simple.
    \item[$(2)$] If $P$ is flag, then so is $P/W_K$.
  \end{itemize}
\end{proposition}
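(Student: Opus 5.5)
The plan is to compare vertex neighbourhoods of $P$ and $Q:=P/W_K=P\cap C_K$ using \cref{lemma:intersection HF}, \cref{prop:facets of polytopes} and \cref{lem:stabilizer}. Both $P$ and $Q$ are full-dimensional. This is clear for $P$; for $Q$ it holds because $P$ contains a neighbourhood of any point of $P^\circ\cap C_S^\circ$.

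\emph{Step 1: local structure at a point of $C_K$.} Fix $x\in Q$ and set $K_x=\{k\in K: \ip{x,\alpha_k}=0\}$. By \cref{thm:fundamental chamber}.(2), the stabilizer of $x$ in $W_K$ is $W_{K_x}$. By \cref{lemma:intersection HF}.(4) (here non-degeneracy is used), the minimal face $G$ of $P$ containing $x$ is preserved by every $r_k$ with $k\in K_x$. Every facet $F'\supseteq G$ of $P$ then satisfies $\ip{\ell_{F'},\alpha_k}=0$. Consequently $W_{K_x}$ fixes each facet of $P$ through $x$, and all these facets lie in $\mathcal{F}_K$ (their barycenters are fixed by $W_{K_x}$, and the argument of \cref{lem:stabilizer}.(2) puts them in $\mathcal F_K$). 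Near $x$, $P$ is the cone $\{y:\ip{\ell_{F'},y-x}\le 0,\ F'\ni x\}$, whose normals all lie in $\alpha_{K_x}^\perp$. Hence near $x$,
\[
Q=\bigl(\text{cone of }P\text{ at }x\bigr)\cap\{y:\ip{y,\alpha_k}\ge0,\ k\in K_x\},
\]
and this is a product: the cone of $P$ is $(\text{cone in }\alpha_{K_x}^\perp)\times\spn(\alpha_{K_x})$, and the chamber conditions only involve the $\spn(\alpha_{K_x})$ factor, where they cut out a simplicial cone of dimension $|K_x|$. This uses that the $\alpha_k$ with $k\in K_x$ are linearly independent.

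\emph{Step 2: simplicity.} Let $v$ be a vertex of $Q$. By the product structure, $v$ is a vertex of $Q$ iff $v$ lies on a face $G$ of $P$ with $G\cap\alpha_{K_v}^\perp$-directions trivial, i.e.\ $G$ is the minimal face of $P$ through $v$ with $\dim G=|K_v|$ and $G$ meeting $v+\alpha_{K_v}^\perp$ in a point. The facets of $Q$ at $v$ are of two kinds: the $F\cap C_K$ with $F\ni v$, $F\in\mathcal F_K$, and the $H_k\cap P$ with $k\in K_v$. This matches the bijection in \cref{prop:facets of polytopes}.(1). Their number is $\#\{F\ni v\}+|K_v|$, and $Q$ is simple at $v$ iff the cone of $P$ at $v$ in $\alpha_{K_v}^\perp$ is simplicial, i.e.\ iff $P$ is simple along $G$. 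Since every face of $P$ is $W_K$-conjugate to one meeting $C_K$, and every face of $P$ meeting $C_K$ contains (by a dimension count within the face) a minimal face of the type above, $P$ is simple iff simple at all such $G$, iff $Q$ is simple. The main obstacle is making this count rigorous: one must show that a vertex $w$ of $P$ is simple iff the vertex $v$ of $Q$ obtained by intersecting the face $\bigcap_{F\ni w,\,F\in\mathcal F_K}F$ with $\bigcap_{k\in K_v}H_k$ is simple. I would do this through the product decomposition of Step 1, applied at the barycenter of $G$.

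\emph{Step 3: flagness.} Suppose $P$ is flag and take facets of $Q$ that are pairwise intersecting: $F_1\cap C_K,\dots,F_p\cap C_K$ and $H_{k_1}\cap P,\dots,H_{k_q}\cap P$. Pairwise intersection of the $F_i$ gives, by flagness of $P$, a nonempty face $G=\bigcap F_i$. Using \cref{lem:stabilizer}.(2), $G$ meets $C_K$, since each $F_i$ does and the $F_i$ are in $\mathcal F_K$. For a mixed pair, $F_i\cap H_{k_j}\neq\emptyset$ gives $r_{k_j}F_i=F_i$ by \cref{lemma:intersection HF}, so each $r_{k_j}$ preserves $G$. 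The $H_k$ walls of the chamber meet pairwise, and the conditions $\ip{\,\cdot\,,\alpha_{k_j}}=0$ define a face of $C_K$. It then suffices to show that $G$ meets $\bigcap_j H_{k_j}$. The group $W_{\{k_j\}}$ preserves $G$, so it fixes the barycenter of $G$, which therefore lies in $\bigcap_j H_{k_j}$. Hence $\bigcap F_i\cap\bigcap H_{k_j}\cap C_K\ne\emptyset$, after moving the barycenter into $C_K$ by an element of $W_{K\setminus\{k_j\}}$ commuting with the fixed walls, or more simply by noting that the barycenter already lies in $C_K$ because $G\cap C_K\neq\emptyset$ and $G$ is $W$-stable in the relevant directions.
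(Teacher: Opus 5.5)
Your forward direction of (1) and your argument for (2) follow the paper's proof. The local product decomposition of $Q$ at a vertex $v$ repackages the paper's count: by \cref{lemma:intersection HF}, the normals of the facets of $P$ through $v$ and the roots $\alpha_k$, $k\in K_v$, span orthogonal complementary subspaces. For flagness the paper likewise takes $G=\bigcap_i F_i\neq\emptyset$ and notes that every $H_{k_j}$ passes through the barycenter of $G$.

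\textbf{The gap is the converse of (1), which you leave open.} The ``main obstacle'' you describe is a red herring. The equivalence you propose is circular, since $K_v$ enters the definition of $v$. Moreover, Step 1 applied at $v$ or at the barycenter of $G$ only gives simplicity of $P$ \emph{along} $G$, and this does not descend to the vertices of $G$.

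The missing observation is non-degeneracy at the vertices of $P$ themselves. Every vertex $w$ of $P$ is $W$-conjugate to a point of $\Lambda\subset C_S\setminus\partial C_S$. So by \cref{thm:fundamental chamber}.(2) it has trivial stabilizer and lies on no $H_k$. Hence if $w\in C_K$, then $K_w=\emptyset$ and $w$ is a vertex of $Q$. Moreover $Q$ coincides with $P$ near $w$ (your Step 1 with $K_x=\emptyset$), so $Q$ simple at $w$ forces $P$ simple at $w$. Every vertex of $P$ is $W_K$-conjugate to such a $w$. This is exactly the paper's converse, and it is what your remark about faces of $P$ meeting $C_K$ reduces to when the face is a vertex.

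Two justifications also need repair. In Step 1 you claim the facets through $x$ lie in $\mathcal F_K$, and in Step 3 you claim $G$ meets $C_K$ with barycenter $b_G\in C_K$. Neither follows from \cref{lem:stabilizer}.(2) or from stabilizer considerations. Both follow from this fact: for non-degenerate $P$, every face $G$ meeting $C_K$ has $b_G\in C_K$. Indeed, suppose $\ip{b_G,\alpha_k}<0$ for some $k\in K$. Then the segment from a point of $G\cap C_K$ to $b_G$ meets $H_k$, so $r_k(G)=G$ by \cref{lemma:intersection HF}. Hence $b_G\in H_k$, a contradiction.

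For Step 3, choose $x\in G$ and $t\in W_K$ with $t(x)\in C_K$. Each $t(F_i)$ then meets $C_K$, so it lies in $\mathcal F_K$, so it equals $F_i$ by uniqueness of orbit representatives. Thus $t(x)\in G\cap C_K$, and $b_G\in C_K\cap\bigcap_j H_{k_j}$.

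Discard the alternative of moving the barycenter by an element of $W_{K\setminus\{k_j\}}$: such elements need not preserve $G$ or the walls $H_{k_j}$. The paper's proof of (2) passes over $b_G\in C_K$ silently, so making it explicit is worthwhile.
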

\begin{proof}
  $(1)$. Suppose that $P$ is simple and a vertex of $P/W_K$ is contained in facets determined, via \cref{prop:facets of polytopes}.$(1)$, by some maximal subsets $K'\subset \mathcal{F}_K$ and  $K''\subset K$. Then  $|K'|+|K''|\ge n$, and  
  \begin{align*}
    n=\dim_{\R}\left(\spn_{\R}\left(\ell_{F_{i}}: F_i\in K'\right)\right)+\dim_{\R}\left(\spn_{\R}\left(\ell_{H_{j}}: j\in K''\right)\right)=|K'|+|K''|,
  \end{align*}
  where the first equality holds because the two spans are orthogonal complements of each other, by \cref{lemma:intersection HF}. Hence $P/W_K$ is simple.

  Conversely, suppose that $P/W_K$ is simple. Each vertex of $P$ is $W_K$-conjugate to a vertex of $P/W_K$, which is also the intersection of exactly $n$ facets from $\mathcal{F}_K$.
Hence $P$ is simple.

  $(2)$. Suppose that $F_{1}, F_{2},\ldots, F_{p}$ and $H_{k_1}, H_{k_2},\ldots, H_{k_q}$ support facets of $P/W_K$  such that every two of them have non-empty intersection.
  Then $F:=F_{1}\cap F_{2}\cap\cdots\cap F_{p}$ is a non-empty face of $P$. 
  Moreover, each $H_{k_j}$, for $1\leq j\leq q$, passes through the  barycenter of $F$.
  Therefore, 
  $F\cap H_{k_1}\cap\cdots\cap H_{k_q}\ne \emptyset$, and hence $P/W_K$ is flag.
  \qedhere
\end{proof}

\begin{remark}
  When $\Lambda$ is a singleton (i.e., $P_{\Lambda} = P_{\lambda}$), the results of \cref{lem:simple flag} were obtained combinatorially in \cite{horiguchiToricOrbifoldsAssociated2024}.
See also \cite{burrullStronglyDominantWeight2024} for the case $W_K = W$.
\end{remark}

  \begin{example}\label{example: I(5) and A_2}
    When $R$ is of type $I_2(5)$, the group $W$ is the dihedral group of order $10$; if $\lambda$ is chosen in $H_2$, then $P_{\lambda}$ is a pentagon.
When $R$ is of type $A_2$, the group $W$ is the dihedral group of order $6$;
if $\lambda$ is chosen in $C_S \setminus \partial C_S$, then $P_{\lambda}$ is a hexagon.
The polytopes $P_{\lambda}$ and $P_{\lambda}/W$ are shown in \cref{figure:polytopes}. The $W$-actions on the two permutohedra are described as follows:
\begin{itemize}
\item 
For type $I_2(5)$ with $\lambda \in H_2$, 
the reflection $r_1$ maps $E_i$ to $E_j$ if $i + j \equiv 2 \pmod{5}$, 
and $r_2$ maps $E_i$ to $E_j$ if $i + j \equiv 1 \pmod{5}$.

\item 
For type $A_2$ with $\lambda \in C_S \setminus \partial C_S$, 
the reflection $r_1$ maps $E_i$ to $E_j$ if $i + j \equiv 2 \pmod{6}$, 
and $r_2$ maps $E_i$ to $E_j$ if $i + j \equiv 0 \pmod{6}$.
\end{itemize}
    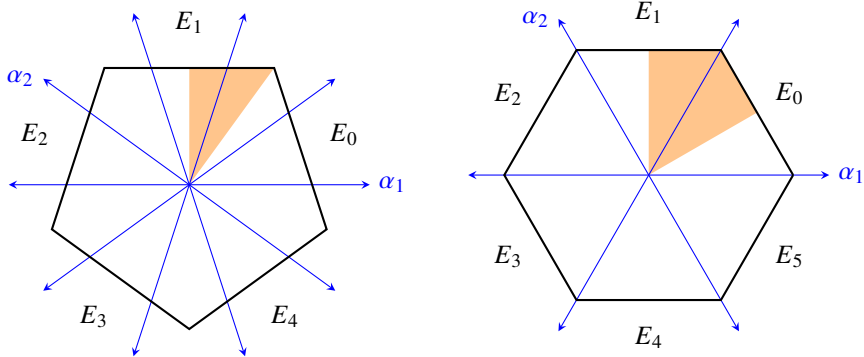
\begin{figure}[h]
      
      \centering

      \begin{tabular}{ccc}
        
      \begin{tikzpicture}[scale=0.8]%0
       
      \newdimen\r
      \r=3cm

      \fill[orange!50,opacity=0.9] (0:0) -- (54:{0.8*\r}) -- (90:{cos(36)*0.8*\r}) -- cycle;

      \foreach \x in {36,72,...,360} {
        \draw[blue,->,>=stealth] (0:0) -- (\x:\r);
      }
      \node[right,blue] at (0:\r) {$\alpha_1$};
      \node[left,blue] at (144:{\r}) {$\alpha_2$};

      \foreach \x in {0,1,...,4} {
        \node at ({18+72*\x}:{0.9*\r}) {$E_{\x}$};
      }

      \draw[thick] (54:{0.8*\r})-- (126:{0.8*\r})-- (198:{0.8*\r})--(270:{0.8*\r})--(342:{0.8*\r})--cycle;

      \end{tikzpicture}

      & \quad  &

      \begin{tikzpicture}[scale=0.8]%0
       
      \newdimen\r
      \r=3cm
      
      %draw chamber
      \fill[orange!50,opacity=0.9] (0:0) -- (30:{cos(30)*0.8*\r}) -- (60:{0.8*\r}) -- (90:{cos(30)*0.8*\r}) -- cycle;
       
      %draw roots
      \foreach \x in {60,120,...,360} {
        \draw[blue,->,>=stealth] (0:0) -- (\x:\r);
      }

      %label roots
      \node[right,blue] at (0:\r) {$\alpha_1$};
      \node[left,blue] at (120:{\r}) {$\alpha_2$};
      
      %draw polytope
      \draw[thick] (0:{0.8*\r}) \foreach \x in {0,60,...,360} { -- (\x:{0.8*\r}) };
      
      %\label edges

      \foreach \x in {0,1,...,5} {
        \node at ({30+60*\x}:{0.9*\r}) {$E_{\x}$};
      }

      \end{tikzpicture}

      \end{tabular}
      \caption{Degenerate polytope $P_{\lambda}$ of type $I_2(5)$ (left) and non-degenerate polytope $P_{\lambda}$ of type $A_2$ (right).
      The blue arrows indicate roots, the orange area indicates $P_{\lambda}\cap C_S$. Each $E_i$ denotes the edge.}\label{figure:polytopes}
    \end{figure}
  \end{example}

\begin{definition}
Let $Q$ be a polytope and $\F$ a field. The \textbf{Stanley--Reisner ring} of $Q$ over $\F$ is defined as
$$
\mathcal{SR}(Q)
  := \frac{\F[X_F :\, \mbox{$F$ is a facet of $Q$}]}{I},
$$
where the ideal $I$ is generated by all monomials
$X_{F_1} X_{F_2} \cdots X_{F_p}$
such that
$F_1 \cap F_2 \cap \cdots \cap F_p = \emptyset.$
Consequently, if $Q$ is flag, then $I$ is generated by the quadratic monomials
$X_{F_1} X_{F_2}$
with $F_1 \cap F_2 = \emptyset$.
\end{definition}

 By \cref{prop:facets of polytopes}, the Stanley--Reisner ring of $P$ is 
    $$\mathcal{SR}(P)=\frac{\F[X_{F,s}:\, (F,s)\in \mathcal{F}]}{I_P}$$
    where the ideal $I_P$ is generated by all  monomials $X_{F_1,s_1}X_{F_2,s_2}\cdots X_{F_p,s_p}$ such that $s_{1}(F_{1})\cap s_{2}(F_{2})\cap\cdots\cap s_{p}(F_{p})=\emptyset$. Moreover, $\mathcal{SR}(P)$ carries a natural $W_K$-action induced by
\begin{align}
  W_K\times \F[X_{F,s}:\, (F,s)\in \mathcal{F}] &\to \F[X_{F,s}:\, (F,s)\in \mathcal{F}],\label{eq:W-action}\\
  \left(t,X_{F,s}\right)&\mapsto X_{F,ts}.\nonumber 
\end{align}
The Stanley--Reisner ring of $P/W_K$ is 
$$\mathcal{SR}(P/W_K)=\frac{\F[X_{F,e}, Y_k:\, F\in\mathcal{F}_K, k\in K]}{I_K}$$
where the ideal $I_K$ is generated by all  monomials $X_{F_1,e}X_{F_2,e}\cdots X_{F_p,e}Y_{k_1}Y_{k_2}\cdots Y_{k_q}$ such that $F_{1}\cap F_{2}\cap\cdots\cap F_{p}\cap H_{k_1}\cap H_{k_2}\cap\cdots\cap H_{k_q}=\emptyset$.

\begin{lemma}\label{lem:I_K generators}
  Suppose that $P$ is non-degenerate. In $\mathcal{SR}(P/W_K)$, the ideal $I_K$ is generated by monomials of the following two types:
  \begin{itemize}
    \item monomials $X_{F_1,e}X_{F_2,e}\cdots X_{F_p,e}$ with $F_{1}\cap F_{2}\cap\cdots\cap F_{p}=\emptyset$.
    \item monomials $X_{F,e}Y_k$ with $F\cap H_k=\emptyset$.
  \end{itemize}
\end{lemma}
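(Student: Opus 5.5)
Both types of monomials listed lie in $I_K$ by definition, since they index empty intersections of facets of $P/W_K$. It therefore suffices to show the reverse inclusion. Take a generator $X_{F_1,e}\cdots X_{F_p,e}Y_{k_1}\cdots Y_{k_q}$ of $I_K$, so that $F_1\cap\cdots\cap F_p\cap H_{k_1}\cap\cdots\cap H_{k_q}\cap C_K=\emptyset$. (Recall that facets of $P/W_K$ are $F\cap C_K$ and $H_k\cap P$, by \cref{prop:facets of polytopes}.$(1)$.) I will show that this monomial is divisible by a monomial of one of the two listed types.

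I split into cases according to whether $G:=F_1\cap\cdots\cap F_p$ is empty.
\begin{itemize}
\item If $G\cap C_K=\emptyset$ as a subset of $P/W_K$, I want $G=\emptyset$ in $P$. Indeed, if $G\neq\emptyset$, then its barycenter $b$ lies in every $F_i$. Each $F_i\in\mathcal{F}_K$ has barycenter in $C_K$, so I need $b\in C_K$. I would argue this as follows: $G$ is a face of $P$ meeting $C_K$ (to be justified below), so by \cref{lemma:intersection HF} any $H_k$ meeting $G$ passes through its barycenter.
\item Simpler route: if $G\ne\emptyset$, then $G$ is a face of $P$ and some $W_K$-conjugate $t(G)$ meets $C_K$. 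One would like $G$ itself to meet $C_K$.
\end{itemize}
Let me instead argue directly. Take $x\in G$ and choose $t\in W_K$ with $t(x)\in C_K$. Then $t(x)\in t(F_i)\cap C_K$, and $t(F_i)$ meets $C_K$. Since the barycenter of $F_i$ lies in $C_K$ and $F_i$ meets the interior of the chamber region appropriately, I expect to use \cref{lem:stabilizer}.$(2)$-type reasoning: $t(F_i)\cap C_K\neq\emptyset$ together with non-degeneracy forces $t(F_i)\in\mathcal{F}_K$. This is the argument already used in the proof of \cref{lem:stabilizer}.$(2)$, relying on \cref{lemma:intersection HF}.$(4)$. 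Uniqueness in \cref{prop:facets of polytopes}.$(2)$ then gives $t(F_i)=F_i$ for all $i$. Hence $t(x)\in G\cap C_K$, so $G\cap C_K\neq\emptyset$. Consequently, emptiness of $G\cap C_K$ is equivalent to emptiness of $G$, which yields the first type of divisor.

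Now suppose $G\neq\emptyset$ but $G\cap H_{k_1}\cap\cdots\cap H_{k_q}\cap C_K=\emptyset$. I claim some single $F_i$ misses some $H_{k_j}$; that monomial $X_{F_i,e}Y_{k_j}$ is then a divisor of the second type. Suppose instead that every $F_i$ meets every $H_{k_j}$. By \cref{lemma:intersection HF}.$(4)\Rightarrow(1)$, each $r_{k_j}$ preserves each $F_i$, hence preserves $G$. Therefore $r_{k_j}$ fixes the barycenter $b$ of $G$, i.e.\ $b\in H_{k_j}$ for all $j$. It remains to check $b\in C_K$. Since $G\cap C_K\neq\emptyset$ and $G$ is preserved by the stabilizer reasoning, I would show $b\in C_K$ by averaging. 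Pick $y\in G\cap C_K$; the barycenter of $G$ is a point of $G$ fixed by the stabilizer of $G$ in $W_K$. Alternatively, use the proof of \cref{lem:simple flag}.$(2)$, which asserts exactly that the hyperplanes meeting $G$ pass through its barycenter and that the barycenter of such a face lies in $C_K$. I expect this step — establishing that the barycenter of $G$ lies in $C_K$ whenever $G\cap C_K\ne\emptyset$ — to be the main technical point. I would handle it by noting that $G\cap C_K$ and the $W_K$-orbit structure make $G$ the face of $P$ containing $y$ in its relative interior up to conjugacy, then applying \cref{lemma:intersection HF} to each wall $H_k$ with $k\in K$ that meets $G$, and observing that walls not meeting $G$ leave $G$ entirely on the positive side.

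Having shown $b\in G\cap H_{k_1}\cap\cdots\cap H_{k_q}\cap C_K$, we obtain a contradiction. This completes the reduction, and with it the proof.
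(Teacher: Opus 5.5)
Your argument is correct and is essentially the paper's: in both, the key step is that if every $F_i$ meets every $H_{k_j}$, then by \cref{lemma:intersection HF} each $r_{k_j}$ preserves $G=F_1\cap\cdots\cap F_p$, so $H_{k_1}\cap\cdots\cap H_{k_q}$ passes through the barycenter of $G$. You are more careful than the paper about the chamber: you check that $G\neq\emptyset$ forces $G\cap C_K\neq\emptyset$ and that this barycenter lies in $C_K$, which the paper leaves implicit. Conversely, the paper explicitly disposes of the case $p=0$ via $0\in P\cap H_{k_1}\cap\cdots\cap H_{k_q}$, which your argument covers only if you read the empty intersection as $G=P$, whose barycenter is $0$.
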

  \begin{proof}
    The monomial $Y_{k_1}Y_{k_2}\cdots Y_{k_q}$ is not in $I_K$ since $0\in P\cap H_{k_1}\cap H_{k_2}\cap\cdots\cap H_{k_q}$. 
    It therefore suffices to show that any monomial of the form
     $X_{F_1,e}X_{F_2,e}\cdots X_{F_p,e}Y_{k_1}Y_{k_2}\cdots Y_{k_q}$ in $I_K$ with $p+q \ge 2$ is generated by monomials of the two types above.
     Equivalently, we must show that
$$
F_1 \cap \cdots \cap F_p
\cap H_{k_1} \cap \cdots \cap H_{k_q}
\neq \emptyset
$$
whenever $F_1 \cap \cdots \cap F_p \neq \emptyset$ and each $F_{i}\cap H_{k_j}\ne\emptyset$ for $1\leq i\leq p$ and $1\leq j\leq q$.
By \cref{lemma:intersection HF}, the reflection $r_{k_j}$  preserves $F_{i}$, and hence preserves $F_{1}\cap F_{2}\cap\cdots\cap F_{p}$. Then $H_{k_1}\cap H_{k_2}\cap\cdots\cap H_{k_q}$ passes through the barycenter of $F_{1}\cap F_{2}\cap\cdots\cap F_{p}$.
\qedhere
  \end{proof}

\section{Cohomology rings with actions}\label{section:variety coho}

In this section, we continue the discussion of \cref{section:faces}. Additionally, we assume that the root system $R$ is crystallographic and that the polytope $P_{\Lambda}$ is simple and non-degenerate, with $\Lambda$ lying in the rational span of $M$.
Recall that $M$ is a lattice lying between the root lattice and the weight lattice. We describe the rational cohomology rings of the associated toric varieties and show that the cohomology ring is independent of the choice of $M$.
The ground field is always $\Q$ unless specified otherwise.
We write $H^*(-)$ for rational cohomology.

 By definition, the polytope $P=\conv\left(W(\Lambda)\right)$ is rational with respect to $M$. Then the quotient polytope $P/W_K$ is simple (by \cref{lem:simple flag}) and rational with respect to $M$.

 For a facet $F$ of $P$, of $P/W_K$, or of $C_S$, we additionally require the normal vector $\ell_F$ of $F$ to be primitive with respect to the lattice dual to $M$ via the inner product.
 The normal vectors of $P$ and of $P/W_K$ form complete simplicial fans, and hence define toric varieties $X_P$ and $X_{P/W_K}$, respectively; see \cite[\S 5]{danilovGEOMETRYTORICVARIETIES1978}.

\begin{theorem}[c.f. {\cite[\S 10.9]{danilovGEOMETRYTORICVARIETIES1978}}]\label{thm:danilov cohomology}
Let $Q$ be a full-dimensional simple rational polytope with respect to a lattice $L$ of rank $n$ in $V$. The rational cohomology ring of the corresponding toric variety $X_Q$ is 
\begin{align*}
  H^*(X_Q)=\frac{\mathcal{SR}(Q)}{J}=\frac{\Q[X_{F}: \mbox{$F$ is a facet of $Q$\,}]}{I+J},
\end{align*}
where each $X_F$ has degree $2$, and the ideal $J$ of $\mathcal{SR}(Q)$ is generated by linear forms
  $$\sum_{F}\ip{u,\ell_F}X_F\,,\quad\mbox 
  {where $F$ ranges over all facets of $Q$,}$$
for all $u\in L$.
\end{theorem}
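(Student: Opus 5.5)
This theorem is the Danilov--Jurkiewicz description, and the paper cites it rather than proving it. The plan is therefore to recall the standard argument and check that it applies to simple rational polytopes that need not be lattice polytopes. The normal fan $\Sigma_Q$, generated by the primitive vectors $\ell_F$ in the dual lattice $L^\vee$, is complete and simplicial because $Q$ is full-dimensional and simple. Hence $X_Q$ is a complete toric orbifold, projective since $Q$ gives an ample $\Q$-Cartier divisor. The proof then has three steps.

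\textbf{Step 1: equivariant cohomology.} Let $T$ be the compact torus $L^\vee\otimes S^1$. For a simplicial fan, $H^*_T(X_Q;\Q)$ is the ring of piecewise polynomial functions on $\Sigma_Q$. With rational coefficients this ring is identified with the Stanley--Reisner ring $\mathcal{SR}(Q)$, the generator $X_F$ corresponding to the Courant function of the ray spanned by $\ell_F$, placed in degree $2$. The argument runs by Mayer--Vietoris over the affine charts $U_\sigma\simeq \C^n/G_\sigma$, with $G_\sigma$ finite. Rationally each chart is equivariantly a point for the subtorus of dimension $\dim\sigma$, so its equivariant cohomology is a polynomial ring in the $X_F$ for $F\supseteq$ the face dual to $\sigma$. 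These pieces glue to the face ring, since monomials in facets with empty intersection restrict to zero on every chart.

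\textbf{Step 2: the linear relations.} $H^*_T(\mathrm{pt})=\Q[L^\vee\otimes\Q]^{\ast}=\mathrm{Sym}(L\otimes\Q)$ maps into $H^*_T(X_Q)$. Under this map, $u\in L$ goes to the global linear function $\langle u,-\rangle$ on the fan. Expanding that function in Courant functions gives $\sum_F \ip{u,\ell_F}X_F$, because $\ell_F$ takes the value $\ip{u,\ell_F}$ on its own ray. These images are exactly the generators of $J$.

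\textbf{Step 3: equivariant formality.} This is the step that needs the hypotheses. The odd rational cohomology of $X_Q$ vanishes: a shelling of the simple polytope, or a Białynicki-Birula/Morse decomposition from a generic linear functional, gives a $\Q$-cell decomposition with only even cells (quotients of $\C^k$ by finite groups). The Serre spectral sequence of $X_Q\to X_Q\times_T ET\to BT$ therefore degenerates. So $H^*_T(X_Q)$ is a free $H^*_T(\mathrm{pt})$-module and $H^*(X_Q)=H^*_T(X_Q)/(H^{>0}_T(\mathrm{pt}))$, which is $\mathcal{SR}(Q)/J$. Equivalently, the images of a $\Q$-basis of $L$ form a regular sequence (a linear system of parameters) in the Cohen--Macaulay ring $\mathcal{SR}(Q)$.

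The main obstacle is Step 3 in the orbifold setting: one must ensure that the finite quotient singularities do not spoil the even-cell decomposition or the freeness. With $\Q$ coefficients the finite isotropy groups are invisible, so the smooth-case argument transfers verbatim. Finally, replacing $L$ by $L\otimes\Q$ changes neither $J$ nor the ring, so only the rational structure of $L$ matters, which is the form in which the theorem is used later.
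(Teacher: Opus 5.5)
Your sketch is correct, but the paper gives no proof of this statement; it quotes it from \cite[\S 10.9]{danilovGEOMETRYTORICVARIETIES1978} (and Jurkiewicz). So the real comparison is with the classical argument behind that citation.

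That argument is non-equivariant. One shows that $H^*(X_Q;\Q)$ is generated by the classes of the invariant divisors $D_F$. One then checks the relations directly: $I$ holds because $D_{F_1}\cap\cdots\cap D_{F_p}=\emptyset$ whenever $F_1\cap\cdots\cap F_p=\emptyset$, and $J$ holds because $\operatorname{div}(\chi^u)=\sum_F\ip{u,\ell_F}D_F$ is principal. Injectivity of $\mathcal{SR}(Q)/J\to H^*(X_Q;\Q)$ then comes from a rank count, comparing the $h$-vector with the Betti numbers supplied by the even cell decomposition.

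Your route replaces generation and the rank count by two structural facts: $H^*_T(X_Q;\Q)\cong\mathcal{SR}(Q)$, and equivariant formality. Leray--Hirsch then gives surjectivity and the exact kernel at once. This buys several things:
\begin{itemize}
\item you only need $H^{\mathrm{odd}}(X_Q;\Q)=0$, not the Betti numbers;
\item you see conceptually why the generators of $J$ form a regular sequence;
\item you get an equivariant refinement that is natural under automorphisms of the fan, which fits the $W_K$-equivariance of $\mathcal{SR}(P)\to H^*(X_P)$ used later.
\end{itemize}

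The price is that Step~1 is a genuine theorem (Bifet--De Concini--Procesi, Brion). Your stated reason, that monomials over non-meeting facets vanish on every chart, only shows that $\mathcal{SR}(Q)$ maps into compatible families of polynomials on the charts. That $H^*_T(X_Q;\Q)$ is exactly this ring needs a real gluing argument, for instance degeneration of the Mayer--Vietoris spectral sequence for the cover by the $U_\sigma$. You should cite it rather than present it as evident.

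Finally, your closing remark only concerns the range of $u$. The primitive $\ell_F$ still depend on $L$, not just on $L\otimes\Q$. Independence of the lattice needs the rescaling $X_F\mapsto X_F/c_F$ of \cref{prop:indep lattice}.
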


The following result shows that the rational cohomology is, to some extent, independent of the choice of lattice. 

\begin{proposition}\label{prop:indep lattice}
  Under the condition of \cref{thm:danilov cohomology}, 
  let $L'$ be a sub-lattice of $L$ of finite  index. 
  Let $X_Q$ and $X'_Q$ be the toric varieties defined with respect to the lattices $L$ and $L'$, respectively.
  Then there is an isomorphism between the rational cohomology rings $H^*(X_Q)$ and $H^*(X'_Q)$.
\end{proposition}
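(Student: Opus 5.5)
We use the presentation of \cref{thm:danilov cohomology} for both varieties. Both fans have the same rays, since the facets of $Q$ do not depend on the lattice, so both rings are quotients of the same Stanley--Reisner ring $\mathcal{SR}(Q)$. The only differences are the choice of normal vectors $\ell_F$ and the set of $u$ used to generate $J$. Write $\ell_F$ for the primitive outward normal of $F$ with respect to the dual lattice $L^\vee$ of $L$, and $\ell'_F$ for the primitive normal with respect to $(L')^\vee$. Since $L'\subset L$ has finite index, we have $L^\vee\subset (L')^\vee$, also of finite index. Both vectors lie on the same ray, so $\ell_F=c_F\ell'_F$ for a positive integer $c_F$. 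Denote by $J$ and $J'$ the linear ideals for $(L,\ell_F)$ and $(L',\ell'_F)$, respectively.

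First we show that enlarging or shrinking the lattice of $u$'s does not change the ideal over $\Q$. The linear forms $\sum_F\ip{u,\ell_F}X_F$ depend $\Z$-linearly on $u$. Since $L'$ has finite index in $L$, some positive integer $N$ satisfies $NL\subset L'$. Hence the $\Q$-span of these forms for $u\in L'$ equals their $\Q$-span for $u\in L$, which is the span over $u\in L\otimes\Q=V_\Q$. So we may regard both $J$ and $J'$ as generated by $\theta_u:=\sum_F\ip{u,\ell_F}X_F$ and $\theta'_u:=\sum_F\ip{u,\ell'_F}X_F$ respectively, with $u$ ranging over the rational span.

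Next we define the isomorphism. Let $\phi$ be the substitution $X_F\mapsto c_F X_F$ on $\Q[X_F]$. It is a graded ring automorphism, with inverse $X_F\mapsto c_F^{-1}X_F$, which is available because we work over $\Q$. We check its effect on the two ideals:
\begin{itemize}
\item Every Stanley--Reisner monomial $X_{F_1}\cdots X_{F_p}$ with $F_1\cap\cdots\cap F_p=\emptyset$ is sent to a nonzero scalar multiple of itself, so $\phi(I)=I$.
\item Linear forms transform as $\phi(\theta'_u)=\sum_F c_F\ip{u,\ell'_F}X_F=\sum_F\ip{u,\ell_F}X_F=\theta_u$, so $\phi(J')=J$.
\end{itemize}
Therefore $\phi$ descends to a graded ring isomorphism $H^*(X'_Q)=\Q[X_F]/(I+J')\to\Q[X_F]/(I+J)=H^*(X_Q)$.

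There is no serious obstacle here. The one point needing care is the bookkeeping of which lattice the $u$'s run over versus which dual lattice the normals are primitive in. This is settled by passing to $\Q$-spans as in the first step, together with the fact that the $c_F$ are nonzero scalars, which are invertible over $\Q$.
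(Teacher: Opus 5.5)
Your proof is correct and follows essentially the same route as the paper, which also notes that rescaling the lattice vectors $u$ leaves the ideal $J$ unchanged over $\Q$, and that rescaling each normal $\ell_F$ by a nonzero scalar $c_F$ is absorbed by rescaling the variable $X_F$. Your observation $NL\subset L'$, which identifies the two $\Q$-spans of linear forms, is a slightly cleaner substitute for the paper's implicit choice of adapted bases $e_i$ of $L$ and $b_ie_i$ of $L'$.
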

\begin{proof}
  Suppose that $L$ has a basis $e_1,e_2,\ldots,e_n$. Then the ideal $J$ of $\mathcal{SR}(Q)$ is generated by $n$ linear forms, for $1\leq i\leq n$,
  \begin{align*}
    \eta_i:= \sum_{F}\ip{e_i,\ell_F}X_F,\quad\mbox 
  {where $F$ ranges over all facets of $Q$.}
  \end{align*}
  We need to show that the two algebras below, obtained by adjusting the generators $\eta_i$ of $J$, are isomorphic to $H^*(X_Q)$:
  \begin{itemize}
    \item the algebra $\mathcal{H}'$ obtained from $H^*(X_Q)$  by changing each $e_i$ into $b_i\cdot e_i$ with $b_i\in\Q\setminus 0$,
    \item the algebra $\mathcal{H}''$ obtained from $\mathcal{H}'$  by changing each $\ell_F$ into $c_F\cdot \ell_F$ with $c_F\in\Q\setminus 0$.
  \end{itemize}
  We have $\mathcal{H}'=H^*(X_Q)$ since this operation does not change the ideal $J$.
   There is an isomorphism from $\mathcal{H}''$ to $\mathcal{H}'$ given by sending each $X_F$ to $X_F/c_F$. 
   Clearly, $H^*(X'_Q)$ coincides with an algebra of the form $\mathcal{H}''$.
   \qedhere
\end{proof}

We can now describe the rational cohomology rings of $X_P$ and $X_{P/W_K}$ without specifying the lattice $M$ explicitly.

\begin{theorem}\label{thm:cohomology of variety}\phantom{}
  \begin{itemize}
    \item[$(1)$] The cohomology ring $H^*(X_{P})$ is isomorphic to 
  $$ \frac{\mathcal{SR}(P)}{J_P}=\frac{\Q[X_{F,s}: (F,s)\in \mathcal{F}]}{I_P+J_P},
  $$
  where each $X_{F,s}$ has degree $2$, and the ideal $J_P$ is generated by linear forms
  $$\sum_{(F,s)\in\mathcal{F}}\ip{u,\ell_{s(F)}}X_{F,s}\qquad \mbox{for all}\,\, u\in M.$$
  
  \item[$(2)$] The cohomology ring $H^*(X_{P/W_K})$ is isomorphic to 
  $$ \frac{\mathcal{SR}(P/W_K)}{J_K}=\frac{\Q[X_{F,e}, Y_k:\, F\in\mathcal{F}_K,\, k\in K]}{I_K+J_K},
  $$
  where each $Y_k$ is of degree 2, and the ideal $J_K$ is generated by linear forms 
  $$\sum_{F\in\mathcal{F}_K}\ip{u,\ell_{F}}X_{F,e}+\sum_{k\in K}\ip{u,\ell_{H_k}}Y_k\qquad \mbox{for all}\,\, u\in M.$$
  \end{itemize}
\end{theorem}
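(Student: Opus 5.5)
The plan is to derive both parts from \cref{thm:danilov cohomology}, relabel the facets using \cref{prop:facets of polytopes}, and then use \cref{prop:indep lattice} to remove the dependence on normalizations. The key preliminary point is that the fans of $P$ and $P/W_K$ are simplicial and rational with respect to the lattice dual to $M$. Recall that this lattice is identified inside $V$ via $\ip{-,-}$. Simplicity of $P/W_K$ comes from \cref{lem:simple flag}.(1), and rationality holds because the vertices of $P\cap C_K$ lie in $\Q M$. Danilov's theorem is stated for a polytope rational with respect to a lattice $L$; here I apply it with $L=M$. The linear forms $\sum_F\ip{u,\ell_F}X_F$, $u\in M$, are exactly the linear relations from characters of the torus, which pair with the primitive normals in the dual lattice.

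For part (1), \cref{prop:facets of polytopes}.(2) gives a bijection $(F,s)\mapsto s(F)$ from $\mathcal{F}$ to the facets of $P$. Renaming $X_{s(F)}$ as $X_{F,s}$ turns $\mathcal{SR}(P)$ into the presentation with ideal $I_P$ recorded after the Stanley--Reisner definition. It also turns $J$ into $J_P$, with coefficients $\ip{u,\ell_{s(F)}}$. Here $\ell_{s(F)}$ is the primitive outward normal of $s(F)$. Since $W$ preserves $M$, and hence its dual lattice, isometrically, we have $\ell_{s(F)}=s(\ell_F)$. This identity is not needed for the statement itself, but it shows the presentation is compatible with the $W_K$-action \cref{eq:W-action}.

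For part (2), \cref{prop:facets of polytopes}.(1) gives a bijection from $\mathcal{F}_K\sqcup K$ to the facets of $P/W_K$. Write $X_{F,e}$ for the facet $F\cap C_K$ and $Y_k$ for $H_k\cap P$. Then $\mathcal{SR}(P/W_K)$ has ideal $I_K$, which \cref{lem:I_K generators} simplifies further. The linear ideal becomes $J_K$.

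Two checks remain. The first is that the outward normal of the facet $F\cap C_K$ of $P/W_K$ equals that of $F$. This holds because $F\cap C_K$ spans the same hyperplane as $F$: the barycenter of $F$ lies in $C_K$, so $F\cap C_K$ is full-dimensional in that hyperplane. The second is that the outward normal of $H_k\cap P$ is a negative multiple of $\alpha_k$, which is what $\ell_{H_k}$ denotes.

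The main obstacle is primitivity. The paper's convention is that $\ell_{H_k}$ is primitive in the dual of $M$, but one must make sure every normal used matches the one Danilov's theorem requires. Any discrepancy is only a nonzero rational rescaling of individual normals $\ell_F$, or a change of lattice of finite index. The argument in \cref{prop:indep lattice} shows that such rescalings give isomorphic rings via $X_F\mapsto X_F/c_F$. So the isomorphism holds regardless, which is all the statement claims.
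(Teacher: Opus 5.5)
Your proposal is correct and follows essentially the paper's route: the paper gives no separate argument, treating the statement as Danilov's theorem (Theorem~\ref{thm:danilov cohomology}) applied with $L=M$ to $P$ and to $P/W_K$, with the facets relabeled through the bijections of Proposition~\ref{prop:facets of polytopes}. The paper fixes every normal (of facets of $P$, of $P/W_K$, and of $C_S$) to be primitive in the dual of $M$, so the normals of $F$ and of $F\cap C_K$ agree automatically; your rescaling fallback via Proposition~\ref{prop:indep lattice} is therefore harmless but not actually needed.
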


The $W_K$-action on $\mathcal{SR}(P)$ defined in \cref{eq:W-action} induces a $W_K$-action on $H^*(X_P)$.
The canonical quotient map $\mathcal{SR}(P)\to H^*(X_P)$ is therefore $W_K$-equivariant.

\begin{example}\label{example: coho ring A_2}
  We continue from \cref{example: I(5) and A_2} to describe $H^*(X_{P_{\lambda}})$ and $H^*(X_{P_{\lambda}/W})$  when $R$ is of type $A_2$ and $M$ is the root lattice. Following \cref{figure:polytopes}, we have that $\mathcal{F}_S=\{E_0,E_1\}$ and
  $$W(E_0)=\{E_0,E_2=r_1(E_0),E_4=r_2r_1(E_0)\},\quad W(E_1)=\{E_1,E_3=r_1r_2(E_1),E_5=r_2(E_1)\}.$$
Moreover, 
  $$\ip{\ell_{E_0},\alpha_1}=1,\quad \ip{\ell_{E_0},\alpha_2}=0,\quad\ip{\ell_{E_1},\alpha_1}=0,\quad \ip{\ell_{E_1},\alpha_2}=1,$$
  $$\ell_{H_1}=-2\ell_{E_0}+\ell_{E_1},\quad \ell_{H_2}=\ell_{E_0}-2\ell_{E_1}.$$
  The cohomology ring $H^*(X_{P_\lambda})$ is 
  $$ \frac{\mathcal{SR}(P_\lambda)}{J_{P_\lambda}}=\frac{\Q[X_{E_{i}}: 0\leq i\leq 5]}{I_{P_\lambda}+J_{P_\lambda}},
  $$
  where $I_{P_\lambda}=\ip{X_{E_i}X_{E_j}:i-j\not\equiv \pm 1\pmod{6}}$, and $J_{P_\lambda}$ is generated by two linear forms
  \begin{align*}
    X_{E_0}-X_{E_2}-X_{E_3}+X_{E_5},\quad X_{E_1}+X_{E_2}-X_{E_4}-X_{E_5}.
  \end{align*}
 The cohomology ring $H^*(X_{P_{\lambda}/W})$ is 
  $$ \frac{\mathcal{SR}(P_{\lambda}/W)}{J_S}=\frac{\Q[X_{E_0}, X_{E_1},Y_1, Y_2]}{I_S+J_S},
  $$
  where $I_S=\ip{X_{E_0}Y_1,\, X_{E_1}Y_2}$, and $J_{S}$ is generated by two linear forms
  \begin{align*}
    X_{E_0}-2Y_1+Y_2,\quad X_{E_1}+Y_1-2Y_2.
  \end{align*}
\end{example}

\section{Construction of cohomological isomorphism}\label{section:proofs coho}
In this section, we continue the discussion of \cref{section:variety coho}. We construct and prove the isomorphism stated in \cref{general coho ring iso}.

For a facet $F\in\mathcal{F}_K$ and $s\in W_K$, we have $s(\ell_{F})=\ell_{s(F)}$ since $W$ preserves the lattice $M$ and its dual. 
By  \cref{thm:fundamental chamber}.$(3)$, we have
\begin{align}
  s(\ell_{F})-\ell_{F}=\sum_{k\in K} C_{F,s,k}\ell_{H_k},\label{eq:coefficients of s}
\end{align}
 for some non-negative rational coefficients $C_{F,s,k}$. Moreover, $C_{F,s,k}>0$ only if the simple reflection $r_k$ lies in the minimal parabolic subgroup containing $s$.

Define the homomorphism
\begin{align}
  \Phi: \Q[X_{F,e}, Y_k: \, F\in\mathcal{F}_K,\, k\in K] \longrightarrow H^*(X_{P})\label{eq:topmap}
\end{align}
$$
  \Phi(X_{F,e})=\sum_{s\in{W_K}/{W_F}}X_{F,s}\quad\text{and}\quad \Phi(Y_{k})=\sum_{(F,s)\in\mathcal{F}} C_{F,s,k}X_{F,s}\, .
$$
This construction is motivated by that in \cite[Equation (5.3)]{horiguchiToricOrbifoldsAssociated2024}. We will show that $\Phi$ descends to the quotient and induces a homomorphism
$H^*(X_{P/W_K}) \to H^*(X_P)^{W_K}$.

\begin{lemma}
  The image of $\Phi$ lies in $H^*(X_{P})^{W_K}$.
\end{lemma}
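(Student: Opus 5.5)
The plan is to check invariance on the generators. Since $\Phi$ is a ring homomorphism and the $W_K$-action on $H^*(X_P)$ is by ring automorphisms (it is induced from \cref{eq:W-action}, and $J_P$ is $W_K$-stable), the image of $\Phi$ lies in $H^*(X_P)^{W_K}$ as soon as each $\Phi(X_{F,e})$ and each $\Phi(Y_k)$ is $W_K$-invariant.

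\textbf{The generators $X_{F,e}$.} These are immediate, already in the polynomial ring. For $t\in W_K$, the action sends $X_{F,s}$ to $X_{F,ts}$, and $s\mapsto ts$ permutes $W_K/W_F$. Hence $t\cdot\sum_{s\in W_K/W_F}X_{F,s}=\sum_{s\in W_K/W_F}X_{F,s}$.

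\textbf{The generators $Y_k$: rewriting the coefficients.} Here the coefficients $C_{F,s,k}$ genuinely depend on $s$, so invariance can only hold modulo $J_P$. The idea is to express $C_{F,s,k}$ as a pairing. The vectors $\ell_{H_j}$ for $j\in K$ are nonzero multiples of the linearly independent simple roots $\alpha_j$, and they are rational with respect to $M$. So for each $k\in K$ we can choose $u_k$ in the rational span $\Q\otimes M$ with $\ip{u_k,\ell_{H_j}}=\delta_{kj}$ for all $j\in K$. For example, take a suitable rational combination of fundamental coweights, or solve the linear system inside $\Q\otimes M$. Pairing \cref{eq:coefficients of s} with $u_k$ then gives
$$C_{F,s,k}=\ip{u_k,s(\ell_F)}-\ip{u_k,\ell_F}=\ip{u_k,\ell_{s(F)}}-\ip{u_k,\ell_F},$$
using $s(\ell_F)=\ell_{s(F)}$. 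Substituting this into the definition of $\Phi(Y_k)$ yields
$$\Phi(Y_k)=\sum_{(F,s)\in\mathcal{F}}\ip{u_k,\ell_{s(F)}}X_{F,s}\;-\;\sum_{F\in\mathcal{F}_K}\ip{u_k,\ell_F}\sum_{s\in W_K/W_F}X_{F,s}.$$

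\textbf{Concluding the $Y_k$ case.} The generators of $J_P$ in \cref{thm:cohomology of variety}.$(1)$ depend $\Z$-linearly on $u\in M$. Since we work over $\Q$, the same linear forms with $u\in\Q\otimes M$ also lie in $J_P$, so the first sum vanishes in $H^*(X_P)$. Therefore
$$\Phi(Y_k)=-\sum_{F\in\mathcal{F}_K}\ip{u_k,\ell_F}\,\Phi(X_{F,e})\quad\text{in } H^*(X_P),$$
which is $W_K$-invariant by the first step.

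The main point needing care is the middle step. One must make sure that $u_k$ can be chosen with the prescribed pairings against all $\ell_{H_j}$, $j\in K$, and that it lies in $\Q\otimes M$, so that the first sum really belongs to $J_P$. Both follow from the linear independence of the simple roots and the rationality of the $\ell_{H_j}$ with respect to the dual lattice of $M$.
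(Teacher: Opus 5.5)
Your proof is correct and takes essentially the same route as the paper. In both, you pair \cref{eq:coefficients of s} with a vector $u$ orthogonal to every $\ell_{H_j}$ with $j\in K\setminus\{k\}$, then use the corresponding linear relation from $J_P$ to express $\Phi(Y_k)$ through the manifestly $W_K$-invariant classes $\Phi(X_{F,e})$. The only difference is cosmetic: you normalize $u_k\in\Q\otimes M$ so that $\ip{u_k,\ell_{H_k}}=1$, whereas the paper takes $u\in M$ and divides by $\ip{u,\ell_{H_k}}\neq 0$.
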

\begin{proof}
  It suffices to show that $\Phi(Y_k)\in H^*(X_P)^{W_K}$, since $\Phi(X_{F,e})$ is manifestly $W_K$-invariant.
  In $H^*(X_{P})$, we have 
  $$
  \sum_{(F,s)\in\mathcal{F}}\ip{u,\ell_{s(F)}}X_{F,s}=0.
  $$
  Choose $u\in M$ to satisfy that $\ip{u,\ell_{H_k}}\ne 0$ while $\ip{u,\ell_{H_{k'}}}=0$ for $k'\ne k$. Via \cref{eq:coefficients of s}, we have 
  \begin{align*}
    \sum_{(F,s)\in\mathcal{F}}\ip{u,\ell_{F}}X_{F,s}+\sum_{(F,s)\in\mathcal{F}}\Bigl\langle {u,\sum_{j\in K} C_{F,s,j}\ell_{H_j}}X_{F,s}\Bigr\rangle&=0,\\
    \sum_{(F,s)\in\mathcal{F}}\ip{u,\ell_{F}}X_{F,s}+\ip{u,\ell_{H_k}}\sum_{(F,s)\in\mathcal{F}}C_{F,s,k}X_{F,s}&=0.
  \end{align*}
  Clearly $\sum_{\mathcal{F}}\ip{u,\ell_{F}}X_{F,s}$ is $W_K$-invariant, then so is $\sum_{\mathcal{F}}C_{F,s,k}X_{F,s}$.  
  \qedhere
\end{proof}

\begin{lemma}\label{lem:kernel I_K}
  The kernel of $\Phi$ contains $I_K$.
\end{lemma}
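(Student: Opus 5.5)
By \cref{lem:I_K generators}, it suffices to check that $\Phi$ kills the two types of generators of $I_K$. Both checks come down to the same fact: in $\mathcal{SR}(P)$, any product of variables whose corresponding facets of $P$ have empty common intersection is zero.

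\emph{First type.} Take $X_{F_1,e}\cdots X_{F_p,e}$ with $F_1\cap\cdots\cap F_p=\emptyset$. Its image under $\Phi$ expands as a sum of monomials $X_{F_1,s_1}\cdots X_{F_p,s_p}$. Each term is zero unless $s_1(F_1)\cap\cdots\cap s_p(F_p)\neq\emptyset$, so suppose this intersection is nonempty. By the discussion after \cref{lem:stabilizer} (using non-degeneracy), the intersection is then $W_K$-conjugate to $F_1\cap\cdots\cap F_p$. To make this precise, pick $t\in W_K$ with $t\bigl(s_1(F_1)\cap\cdots\cap s_p(F_p)\bigr)\cap C_K\neq\emptyset$. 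By \cref{lemma:intersection HF}, each $ts_i(F_i)$ meets $C_K$, and its barycenter can be taken in $C_K$. Then $ts_i(F_i)\in\mathcal{F}_K$, and by the uniqueness in \cref{prop:facets of polytopes} this gives $ts_i(F_i)=F_i$. Hence $F_1\cap\cdots\cap F_p\neq\emptyset$, a contradiction. So every term vanishes.

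The step I expect to need most care is the claim that a facet meeting $C_K$ has its barycenter in $C_K$. Here is how I would argue it. If $x\in G\cap C_K$ and the barycenter $b$ of $G$ satisfies $\ip{b,\alpha_k}<0$ for some $k\in K$, then $G$ meets $H_k$, since the segment from $x$ to $b$ crosses it. By \cref{lemma:intersection HF}(4), $r_k$ preserves $G$, so $b\in H_k$, which contradicts $\ip{b,\alpha_k}<0$.

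\emph{Second type.} Take $X_{F,e}Y_k$ with $F\cap H_k=\emptyset$. Then
\[
\Phi(X_{F,e}Y_k)=\sum_{s\in W_K/W_F}\ \sum_{(G,t)\in\mathcal{F}} C_{G,t,k}\,X_{F,s}X_{G,t}.
\]
By $W_K$-invariance, it is natural to reduce to showing that $X_{F,e}\sum_{(G,t)}C_{G,t,k}X_{G,t}=0$. This reduction holds because $\Phi(Y_k)$ is invariant, so applying $s$ to that product yields the other summands. So I must show: if $C_{G,t,k}>0$, then $F\cap t(G)=\emptyset$.

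Suppose instead that $F\cap t(G)\neq\emptyset$. As above, $F\cap t(G)$ is conjugate into $C_K$, and since $F\in\mathcal{F}_K$, \cref{lem:stabilizer}(2) lets us arrange that $F\cap t(G)$ meets $C_K$, so that $t(G)\in\mathcal{F}_K$, i.e.\ $t(G)=G$. That last conclusion looks wrong, so I would redo this step more carefully as follows. The face $F\cap t(G)$ meets some $u(C_K)$. Because $F$ avoids $H_k$, $F$ lies in the open half-space $\ip{-,\alpha_k}>0$. By \cref{lemma:intersection HF}, $t(G)$ is then preserved by a subgroup whose elements fix a point of $F$, and that subgroup contains none of the reflections involving $r_k$. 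Then the minimal coset representative $t$ of $G$ does not involve $r_k$, i.e.\ $t\in W_{K\setminus\{k\}}W_G$. Choosing $t$ to be the minimal representative makes $C_{G,t,k}=0$ by \cref{eq:coefficients of s}.

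The main obstacle is this combinatorial claim: if $t(G)$ meets a facet $F\in\mathcal{F}_K$ that is disjoint from $H_k$, then the minimal coset representative $t$ avoids $r_k$. I would attempt it via \cref{thm:fundamental chamber}(2) applied to a point of $F\cap t(G)$ that lies on the $C_K$ side of $H_k$, together with \cref{thm:fundamental chamber}(3).
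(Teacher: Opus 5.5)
Your treatment of the first type of generator is correct and is essentially the paper's argument; your justification, via \cref{lemma:intersection HF}(4), that a facet meeting $C_K$ has barycenter in $C_K$ fills in a detail the paper leaves implicit. Your reduction of the second type to the claim that $X_{F,e}X_{G,t}\neq 0$ forces $C_{G,t,k}=0$ is also the paper's. The gap is that you never prove this claim: you leave it as the ``main obstacle'', and the sketch you give does not reach it. Knowing that the stabilizer of a point $y\in F\cap t(G)$ preserves $t(G)$ and contains no reflection involving $r_k$ tells you something about the conjugate subgroup $tW_Gt^{-1}$. It tells you nothing about the coset $tW_G$, so it does not yield a representative of $t$ in $W_{K\setminus\{k\}}$. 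Also, \cref{thm:fundamental chamber}(2) only describes stabilizers of points that already lie in $C_K$.

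The missing idea is in the first attempt you abandoned; the error there was forgetting that conjugation changes $t$. Choose $w\in W_K$ that moves a point of $F\cap t(G)$ into $C_K$, using \cref{thm:fundamental chamber}(1). By your barycenter argument, $w(F)$ and $wt(G)$ both lie in $\mathcal{F}_K$, so uniqueness gives $w(F)=F$ and $wt(G)=G$, not $t(G)=G$. Hence $w\in W_F$ and $tW_G=w^{-1}W_G$. By \cref{lem:stabilizer}(1), $W_F$ is generated by the simple reflections $r_j$ ($j\in K$) that fix the barycenter of $F$. Since $F\cap H_k=\emptyset$, $r_k$ is not among them, so $W_F\subseteq W_{K\setminus\{k\}}$. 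Now $C_{G,t,k}$ depends only on $t(\ell_G)=w^{-1}(\ell_G)$, and $w^{-1}$ lies in a standard parabolic subgroup not containing $r_k$. So \cref{thm:fundamental chamber}(3), together with linear independence of the $\ell_{H_j}$, gives $C_{G,t,k}=0$. This is exactly how the paper closes the argument.
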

\begin{proof}
  First we show that $X_{F_1,e}X_{F_2,e}\cdots X_{F_p,e}$ is mapped to 0 under $\Phi$ when $F_{1}\cap F_{2}\cap\cdots\cap F_{p}=\emptyset$. 
  We have
  \begin{align*}
    \Phi\left(X_{F_1,e}X_{F_2,e}\cdots X_{F_p,e}\right)&=\biggl(\sum_{s_1\in {W_K}/{W_{F_1}}}X_{F_1,s_1}\biggr)\biggl(\sum_{s_2\in {W_K}/{W_{F_2}}}X_{F_2,s_2}\biggr)\cdots\biggl(\sum_{s_p\in {W_K}/{W_{F_p}}}X_{F_p,s_p}\biggr)\\
    &=\sum_{s_1,\ldots,s_p} X_{F_1,s_1}X_{F_2,s_2}\cdots X_{F_p,s_p}\,.
  \end{align*}
Each summand $X_{F_1,s_1}X_{F_2,s_2}\cdots X_{F_p,s_p}$ is $W_K$-conjugate to the item $X_{F_1,e}X_{F_2,e}\cdots X_{F_p,e}$ in $H^*(X_P)$, because the corresponding faces $s_1(F_{1})\cap s_2(F_{2})\cap\cdots\cap s_{p}(F_{p})$ and $F_{1}\cap F_{2}\cap\cdots\cap F_{p}$ of $P$ are $W_K$-conjugate.

  Next we show that $X_{F,e}Y_{k}$ is mapped to 0 under $\Phi$ when $F\cap H_{k}=\emptyset$. We have 
  \begin{align*}
    \Phi\left(X_{F,e}Y_{k}\right)&=\sum_{s\in {W_K}/{W_{F}}}X_{F,s}\Phi(Y_{k})=\sum_{s\in {W_K}/{W_{F}}}s\left(X_{F,e}\Phi(Y_{k})\right)\\
    &=\sum_{s\in {W_K}/{W_{F}}}s\biggl(\sum_{(F',t)\in\mathcal{F}}  C_{F',t,k}X_{F,e}X_{F',t}\biggr). 
  \end{align*}
  It remains to show that if  $X_{F,e}X_{F',t}\ne 0$ then $C_{F',t,k}=0$. Since $F\cap t(F')\ne \emptyset$, there exists $w\in W_K$ such that $w(F)\cap wt(F')=F\cap F'$, hence $w\in W_{F}$ and $wt \in W_{F'}$. We have, by \cref{thm:fundamental chamber}.$(3)$ and \cref{lem:stabilizer}.$(1)$, that
  \begin{align*}
    t(\ell_{F'})-\ell_{F'}=w^{-1}(\ell_{F'})-\ell_{F'}\in \spn_{\Q}\left\{\ell_{H_j}: H_j\cap F\ne\emptyset\right\},
  \end{align*} 
 Since $\ell_{H_k}$ does not belong to this span, we conclude that $C_{F',t,k}=0$.

We can omit all other cases, due to \cref{lem:I_K generators}. 
\qedhere
\end{proof}

\begin{lemma}
  The kernel of $\Phi$ contains $J_K$.
\end{lemma}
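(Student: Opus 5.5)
We will compute $\Phi$ on each generator of $J_K$ and show the image equals (a multiple of) an element of $J_P$, hence vanishes in $H^*(X_P)$. Fix $u\in M$. The generator of $J_K$ associated to $u$ is
$$\theta_u:=\sum_{F\in\mathcal{F}_K}\ip{u,\ell_F}X_{F,e}+\sum_{k\in K}\ip{u,\ell_{H_k}}Y_k .$$
Applying the definition of $\Phi$ in \cref{eq:topmap}, we obtain
$$\Phi(\theta_u)=\sum_{(F,s)\in\mathcal{F}}\ip{u,\ell_F}X_{F,s}+\sum_{(F,s)\in\mathcal{F}}\Bigl(\sum_{k\in K}C_{F,s,k}\ip{u,\ell_{H_k}}\Bigr)X_{F,s}.$$
We then use \cref{eq:coefficients of s} to rewrite the inner sum as $\ip{u,s(\ell_F)-\ell_F}$. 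The two sums combine to
$$\Phi(\theta_u)=\sum_{(F,s)\in\mathcal{F}}\ip{u,s(\ell_F)}X_{F,s}=\sum_{(F,s)\in\mathcal{F}}\ip{u,\ell_{s(F)}}X_{F,s},$$
using $s(\ell_F)=\ell_{s(F)}$, which holds because $W$ preserves $M$ and its dual lattice. The right-hand side is exactly the generator of $J_P$ attached to the same $u$ in \cref{thm:cohomology of variety}.(1). It is therefore zero in $H^*(X_P)$, and since $\Phi$ is a ring homomorphism, all of $J_K$ lies in the kernel.

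The one point needing care is well-definedness. The coefficients $C_{F,s,k}$ and the normal $\ell_{s(F)}$ depend only on the coset $sW_F$, so the identity \cref{eq:coefficients of s} must hold for every coset representative. This holds because $W_F$ fixes $\ell_F$: by \cref{lemma:intersection HF}, each generator $r_k$ of $W_F$ satisfies $\ip{\ell_F,\alpha_k}=0$, so $r_k$ fixes $\ell_F$.

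The coefficients $C_{F,s,k}$ are uniquely determined because the vectors $\ell_{H_k}$, $k\in K$, are linearly independent, being nonzero multiples of distinct simple roots. With this checked, the computation above is purely formal, and I expect no real obstacle.
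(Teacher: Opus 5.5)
Your proposal is correct and follows essentially the same route as the paper. The paper applies $\Phi$ to the generator attached to $u\in M$ and uses \cref{eq:coefficients of s} to merge the two sums into $\sum_{(F,s)\in\mathcal{F}}\ip{u,s(\ell_F)}X_{F,s}\in J_P$; your extra check that $W_F$ fixes $\ell_F$, so that $C_{F,s,k}$ is well defined on cosets, is a valid detail the paper leaves implicit.
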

\begin{proof}
  We have, for any $u\in M$,
  \begin{align*}
    &\Phi\biggl(\sum_{F\in\mathcal{F}_K}\ip{u,\ell_{F}}X_{F,e}+\sum_{k\in K}\ip{u,\ell_{H_k}}Y_k\biggr)\\
    &=\sum_{F\in\mathcal{F}_K}\,\sum_{s\in{W_K}/{W_F}}\ip{u,\ell_{F}}X_{F,s}+\sum_{k\in K}\sum_{(F,s)\in\mathcal{F}}\ip{u,\ell_{H_k}}C_{F,s,k}X_{F,s}\\
    &=\sum_{(F,s)\in\mathcal{F}}\ip{u,s(\ell_{F})}X_{F,s}\in J_P,
  \end{align*}
  where the last equality follows from \cref{eq:coefficients of s}. This completes the proof.
\qedhere
\end{proof}

Therefore the homomorphism $\Phi$ in \cref{eq:topmap} induces a homomorphism 
\begin{align}\label{eq:base map}
  \phi: H^*({X_{P/W_K}})\to H^*(X_{P})^{W_K}.
\end{align}
We will show that $\phi$ is an isomorphism.

\begin{lemma}\label{lem:equiv surjection} Let $G$ be a finite group, and $A,B$ be $\Q[G]$-modules. If a module homomorphism $f:A\to B$ is surjective, then the induced homomorphism between invariants $\bar{f}:A^G\to B^G$ is also surjective. 
\end{lemma}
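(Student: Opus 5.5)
The plan is to use the Reynolds (averaging) operator, which is available because $|G|$ is invertible in $\Q$. For any $\Q[G]$-module $N$, define
$$\rho_N: N\to N^G,\qquad \rho_N(x)=\frac{1}{|G|}\sum_{g\in G} g\cdot x .$$
I will carry out three checks in order.

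\begin{itemize}
\item[$(1)$] The image of $\rho_N$ lies in $N^G$. For $h\in G$, the map $g\mapsto hg$ is a bijection of $G$, so $h\cdot\rho_N(x)=\frac{1}{|G|}\sum_{g}hg\cdot x=\rho_N(x)$.
\item[$(2)$] $\rho_N$ restricts to the identity on $N^G$. If $x$ is $G$-invariant, each summand $g\cdot x$ equals $x$, and there are $|G|$ of them.
\item[$(3)$] $\rho$ is natural with respect to module homomorphisms. Since $f$ is $G$-equivariant and $\Q$-linear, $f\circ\rho_A=\rho_B\circ f$.
\end{itemize}

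With these in hand, the surjectivity argument is short. Let $b\in B^G$. Since $f$ is surjective, choose $a\in A$ with $f(a)=b$, and set $a'=\rho_A(a)$. By $(1)$, $a'\in A^G$. By $(3)$ and then $(2)$,
$$f(a')=\rho_B(f(a))=\rho_B(b)=b .$$
Hence $\bar f(a')=b$, so $\bar f$ is surjective.

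There is no substantial obstacle here. The only point to watch is that ``module homomorphism'' must mean a $\Q[G]$-module homomorphism, i.e.\ a $G$-equivariant map. This equivariance is exactly what makes $\bar f$ well defined and gives the naturality in $(3)$. The division by $|G|$ is legitimate because we work over $\Q$.
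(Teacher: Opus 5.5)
Your proposal is correct and follows the paper's argument: the paper also lifts $b\in B^G$ to some $a\in A$ and observes that $\bar f\bigl(\frac{1}{|G|}\sum_{g\in G} g\cdot a\bigr)=b$. You additionally spell out the steps the paper leaves implicit, namely that the average is $G$-invariant and that $f\circ\rho_A=\rho_B\circ f$.
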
 
\begin{proof} 
  For $b\in B^G$, there exists some $a\in A$ such that $f(a)=b$. Then $\bar{f}\left(\frac{1}{|G|}\sum_{g\in G}g\cdot a\right)=b$. 
  \qedhere 
\end{proof}

\begin{lemma}\label{lem:image}
  Recall that for $F\in\mathcal{F}_K$, we have
  $\Phi(X_{F,e})=\sum_{s\in W_K/W_F} X_{F,s}$,
  and similarly for $\phi(X_{F,e})$.
  \begin{itemize}
    \item[$(1)$] The algebra $\mathcal{SR}(P)^{W_K}$ is generated by $\Phi(X_{F,e})$ for all $F\in\mathcal{F}_K$.
    \item[$(2)$] The algebra $H^*(X_{P})^{W_K}$ is generated by $\phi(X_{F,e})$ for all $F\in\mathcal{F}_K$.
  \end{itemize}
\end{lemma}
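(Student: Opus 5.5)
The plan is to prove (1) by a direct orbit-sum argument in the Stanley--Reisner ring, and then to deduce (2) from (1) using \cref{lem:equiv surjection}.

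For (1), note that $\mathcal{SR}(P)$ has a $\Q$-basis of monomials $X_{F_1,s_1}^{a_1}\cdots X_{F_p,s_p}^{a_p}$ whose support $\{s_1(F_1),\dots,s_p(F_p)\}$ has non-empty intersection. The $W_K$-action in \cref{eq:W-action} permutes this basis. Hence $\mathcal{SR}(P)^{W_K}$ is spanned by the orbit sums
$$\mathcal{O}(m)=\sum_{t\in W_K/\mathrm{Stab}(m)} t(m)$$
of such monomials $m$. The goal is to show, by induction on degree, that every $\mathcal{O}(m)$ is a polynomial in the $\Phi(X_{F,e})$.

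Take a nonzero monomial $m$. Its support is a set of facets meeting in a non-empty face, so after applying an element of $W_K$ we may assume, by the remark following \cref{lem:stabilizer}, that $m=X_{F_1,e}^{a_1}\cdots X_{F_p,e}^{a_p}$ with distinct $F_i\in\mathcal{F}_K$. Non-degeneracy is used here: there is no repetition of the $F_i$, and the face $s_1(F_1)\cap\cdots\cap s_p(F_p)$ is conjugate to $F_1\cap\cdots\cap F_p$. Now expand the product
$$\Phi(X_{F_1,e})^{a_1}\cdots\Phi(X_{F_p,e})^{a_p}.$$
It is a sum of monomials in the $X_{F_i,s}$. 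Those whose support has the same size as that of $m$, namely one facet $s_i(F_i)$ for each $i$, raised to the power $a_i$, are exactly the nonzero terms $\prod_i X_{F_i,s_i}^{a_i}$ with $\bigcap_i s_i(F_i)\ne\emptyset$.

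The key claim is that every such term is a $W_K$-translate of $m$. This follows from the same conjugacy fact used in \cref{lem:kernel I_K}. There exists $w$ with $w s_i(F_i)\cap C_K\neq\emptyset$ for all $i$. By \cref{lemma:intersection HF} and \cref{prop:facets of polytopes}.(2), this forces $w s_i(F_i)=F_i$. So the top-support part of the product is a positive integer multiple of $\mathcal{O}(m)$; each translate appears with the same multiplicity by $W_K$-symmetry.

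The remaining terms have strictly smaller support, because some $F_i$ is repeated with different cosets. Their sum is $W_K$-invariant, since both the full product and $c\,\mathcal{O}(m)$ are. So it is a combination of orbit sums of monomials with smaller support. Inducting on the size of the support (then on degree) gives (1). The main obstacle is to check this bookkeeping carefully: the leftover terms must genuinely have smaller support, and the multiplicity $c$ must be nonzero. Both rest on the non-degeneracy statement \cref{lem:stabilizer}.(2).

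For (2), the quotient map $\mathcal{SR}(P)\to H^*(X_P)$ is surjective and $W_K$-equivariant. By \cref{lem:equiv surjection}, it therefore maps $\mathcal{SR}(P)^{W_K}$ onto $H^*(X_P)^{W_K}$. Combined with (1), the images $\phi(X_{F,e})$ generate $H^*(X_P)^{W_K}$.
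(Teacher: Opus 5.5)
Your route is the paper's. You reduce (2) to (1) via \cref{lem:equiv surjection}, and for (1) you normalize a nonzero monomial to $m=X_{F_1,e}^{a_1}\cdots X_{F_p,e}^{a_p}$ with distinct $F_i\in\mathcal{F}_K$. You then compare its orbit sum with $\prod_i\Phi(X_{F_i,e})^{a_i}$, using that every non-empty face $\bigcap_i s_i(F_i)$ is $W_K$-conjugate to $\bigcap_i F_i$.

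One step is misdescribed, and as written it does not work. The leftover terms in the expansion are those containing a factor $X_{F_i,s}X_{F_i,s'}$ with $s\neq s'$ in $W_K/W_{F_i}$. Their support contains both $s(F_i)$ and $s'(F_i)$. So it has strictly \emph{more} facets than the support of $m$, not fewer, and an upward induction on support size as you set it up would not absorb them.

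No induction is needed. By \cref{lem:stabilizer}.$(2)$, $s(F_i)\cap s'(F_i)=\emptyset$ for $s\neq s'$, so every such term is already zero in $\mathcal{SR}(P)$. In other words, $\bigl(\sum_{s}X_{F_i,s}\bigr)^{a_i}=\sum_{s}X_{F_i,s}^{a_i}$ there. Hence $\prod_i\Phi(X_{F_i,e})^{a_i}=\mathcal{O}(m)$ holds exactly in $\mathcal{SR}(P)$, with multiplicity $1$: each translate $t(m)$, $t\in W_K/\bigcap_i W_{F_i}$, arises from exactly one tuple of cosets. This is the identity the paper's proof uses (the step justified by \cref{lem:stabilizer}.$(2)$). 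After it, (1) follows by subtracting orbit sums one at a time.
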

\begin{proof}
  It suffices to show $(1)$, since the  quotient homomorphism $\mathcal{SR}(P)\to H^*(X_{P})$ induces a surjection $\mathcal{SR}(P)^{W_K}\to H^*(X_{P})^{W_K}$ by \cref{lem:equiv surjection}.

For a nonzero element in $\mathcal{SR}(P)^{W_K}$, we can choose its representative $f$ in 
$\Q[X_{F,s}: (F,s)\in\mathcal{F}]$ which is also $W_K$-invariant by \cref{lem:equiv surjection}. 
One can assume 
\begin{align*} 
  f= c\cdot X_{F_1,s_1}^{n_1}X_{F_2,s_2}^{n_2}\cdots X_{F_p,s_p}^{n_p}+\text{other terms $A$}, 
\end{align*} 
where $c\neq 0$ and $s_1(F_{1})\cap s_2(F_{2})\cap\cdots \cap s_p(F_{p})\ne \emptyset$. 
There exists  $\tilde{s}\in W_K$ such that 
$\tilde{s}\left(s_1(F_{1})\cap\cdots \cap s_p(F_{p})\right)=F_{1}\cap\cdots\cap F_{p}$. 
The stablizer of the face $F_{1}\cap\cdots\cap F_{p}$ is the parabolic subgroup $W':=W_{F_1}\cap\cdots\cap W_{F_p}$.
Using $W_K$-invariance, we may therefore assume 
\begin{align*} 
  f= c\cdot\biggl(\sum_{s\in {W_K}/{W'}} X_{F_1,s}^{n_1}X_{F_2,s}^{n_2}\cdots X_{F_p,s}^{n_p}\biggr)+A. 
\end{align*}
A non-empty face of the form $s(F_1\cap \cdots \cap F_p)$ can be written uniquely as
$t_1(F_{1})\cap\cdots \cap t_p(F_{p})$ with $t_i\in W_K/W_{F_i}$.
Hence
\begin{align*} 
  f&= c\cdot\biggl(\sum_{t_1,\ldots,t_{p}} X_{F_1,t_1}^{n_1}X_{F_2,t_2}^{n_2}\cdots X_{F_p,t_p}^{n_p}\biggr)+A+\mbox{other terms in $I_P$},\\ &=c\cdot\biggl(\sum_{t_1} X_{F_1,t_1}\biggr)^{n_1}\biggl(\sum_{t_2} X_{F_2,t_2}\biggr)^{n_2}\cdots \biggl(\sum_{t_p} X_{F_p,t_p}\biggr)^{n_p}+A+\mbox{other terms in $I_P$}, 
\end{align*} 
where $(t_1,\ldots,t_p)$ ranges over $\frac{W_K}{W_{F_1}}\times\cdots\times\frac{W_K}{W_{F_p}}$.
 The second equality follows from \cref{lem:stabilizer}.$(2)$ which implies that 
 $$X_{F_1,t_1}X_{F_1,w_1}\notin I\Leftrightarrow t_1(F_{1})\cap w_1(F_{1})\ne\emptyset\Leftrightarrow t_1=w_1\in {W_K}/{W_{F_1}}.$$ 
We may then proceed to the remaining term $A$ and argue by induction on the number of monomials.
This completes the proof.
\end{proof}

For any field $\F$, a finite-dimensional graded  $\F$-algebra $R=\bigoplus_{i=0}^dR_i$ with $R_0\cong R_d\cong\F$ is called a \textbf{Poincar\'{e}  duality algebra} (PDA) if the multiplication $R_i\times R_{d-i}\to R_d$ is non-degenerate for $0\leq i\leq d$. The rational cohomology ring of a toric orbifold is a PDA over $\Q$; see \cite[\S 14]{danilovGEOMETRYTORICVARIETIES1978}.

\begin{lemma}[\cite{abeCohomologyRingsRegular2019}, Lemma 10.5]\label{lem:sur is iso}
  Let $f:{A}=\bigoplus_{i=0}^dA_i\to {B}=\bigoplus_{i=0}^dB_i$ be a surjective homomorphism between graded $\F$-algebras with $A_d, B_d$ nonzero. If ${A}$ is a PDA and $f|_{A_d}:A_d\to B_d$ is an isomorphism, then $f$ is an isomorphism.
\end{lemma}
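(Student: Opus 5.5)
The plan is to prove injectivity via the non-degeneracy of the pairing $A_i\times A_{d-i}\to A_d$, using surjectivity to transport relations from $B$ back to $A$. Since $f$ is a surjective ring homomorphism, $f(A_0)=B_0$. Moreover $B_0\neq 0$: if $B_0=0$, then $1_B=0$ and $B=0$, contradicting $B_d\neq 0$. Hence $f$ restricted to each graded piece is a surjection $A_i\to B_i$, and it remains only to show $\ker f=0$.

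Because $f$ is graded, $\ker f$ is a homogeneous ideal. It therefore suffices to show that any homogeneous $a\in A_i$ with $f(a)=0$ is zero. Suppose $a\neq 0$. The Poincar\'e duality property of $A$ gives some $a'\in A_{d-i}$ with $aa'\neq 0$ in $A_d$. Then
\[
f(aa')=f(a)f(a')=0 .
\]
But $f|_{A_d}$ is an isomorphism, so $f(aa')=0$ forces $aa'=0$, a contradiction. Hence $\ker f=0$, and $f$ is bijective in every degree.

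There is no real obstacle here. The only points needing care are that the kernel of a graded map is spanned by homogeneous elements, so the argument may be run degree by degree, and that multiplicativity of $f$ is what carries the pairing condition across. The hypothesis that $B$ is a PDA is not needed. In fact $B$ inherits Poincar\'e duality once $f$ is known to be an isomorphism.
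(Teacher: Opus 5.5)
Your argument is correct and is essentially the paper's proof. The paper notes that the non-degenerate pairing $A_i\times A_{d-i}\to A_d\xrightarrow{\cong}B_d$ factors through $f|_{A_i}\times f|_{A_{d-i}}$, which forces each $f|_{A_i}$ to be injective, and you have simply unwound this elementwise. (Your preliminary discussion of $B_0$ is unnecessary: degreewise surjectivity already follows from $f$ being graded and surjective.)
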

\begin{proof}
  The composite 
  $$\Bigl(A_i\times A_{d-i}\to A_d\xrightarrow[\cong]{f|_{A_d}}B_d\Bigr)=\Bigl(A_i\times A_{d-i}\xrightarrow{f|_{A_i}\times f|_{A_{d-i}}} B_i\times B_{d-i}\to B_d\Bigr)$$
  is non-degenerate. Then each $f|_{A_i}$ has to be injective, and $f$ is an isomorphism.
  \qedhere
\end{proof}

\begin{proof}[\textbf{Proof of \cref{general coho ring iso}}]
We have obtained the  homomorphism $\phi: H^*({X_{P/W_K}})\to H^*(X_{P})^{W_K}$ in \cref{eq:base map}. By \cref{lem:image}, $\phi$ is surjective.
Since the $W_K$-action on $X_P$ preserves orientation, we have
$(H^{2n}(X_{P}))^{W_K}=\Q$. 
Therefore \cref{lem:sur is iso} implies that the surjection $\phi$ is an isomorphism.
\qedhere
\end{proof}

\begin{example}\label{example: map of A_2}
 We continue from \cref{example: coho ring A_2} to describe explicitly the homomorphism
  $\phi: H^*(X_{P_\lambda/W})\to H^*(X_{P_\lambda})^W$
   when $R$ is of type $A_2$ and $M$ is the root lattice. We have
   \begin{align*}
    \ell_{E_2}-\ell_{E_0}=\ell_{H_1},\quad \ell_{E_4}-\ell_{E_0}=\ell_{H_1}+\ell_{H_2},\quad \ell_{E_3}-\ell_{E_1}=\ell_{H_1}+\ell_{H_2},\quad \ell_{E_5}-\ell_{E_1}=\ell_{H_2}.
   \end{align*}
   Then the homomorphism 
   \begin{align*}
    \phi: \frac{\Q[X_{E_0}, X_{E_1},Y_1, Y_2]}{I_S+J_S}\to \left(\frac{\Q[X_{E_{i}}: 0\leq i\leq 5]}{I_{P_\lambda}+J_{P_\lambda}}\right)^{W}
   \end{align*}
   is determined by 
  $$
    \begin{tabular}{lcl}
     $\phi(X_{E_0})=X_{E_0}+X_{E_2}+X_{E_4}$\,, & & $\phi(X_{E_1})=X_{E_1}+X_{E_3}+X_{E_5}$\,,\\
     $\phi(Y_{1})=X_{E_2}+X_{E_3}+X_{E_4}$\,, & & $\phi(Y_{2})=X_{E_3}+X_{E_4}+X_{E_5}$\,.
    \end{tabular}
  $$
   We verify \cref{lem:kernel I_K} in this case:
   \begin{align*}
     \phi(X_{E_0}Y_1)=X_{E_0}\phi(Y_1)+r_1\left(X_{E_0}\phi(Y_1)\right)+r_2r_1\left(X_{E_0}\phi(Y_1)\right)\in I_{P_\lambda},
   \end{align*}
   and similarly $\phi(X_{E_1}Y_2)\in I_{P_\lambda}$.
\end{example}

\section{Polytopal algebras}\label{section:polytopal algebra}
In this section, we continue from \cref{section:faces}. We introduce the definition and properties of the polytopal algebra $\mathcal{A}(Q)$ for a full-dimensional polytope $Q$ in $V$. We then prove \cref{general ana ring iso} when $P$ is simple and non-degenerate. The ground field is always $\R$ unless specified otherwise.

For each facet $F$ of $Q$, we choose the normal vector $\ell_F$ to have norm $1$.

\begin{definition}\label{def:polytopal algebra}
  The \textbf{(graded) polytopal algebra $\mathcal{A}(Q)$} over $\R$ is defined as 
  $$\mathcal{A}(Q)=\frac{\mathcal{SR}(Q)}{J}=\frac{\R[X_{F}: \mbox{$F$ is a facet of $Q$\,}]}{I+J},$$
  where each $X_F$ has degree 1, and  the ideal $J$ of $\mathcal{SR}(Q)$ is generated by $n$ linear forms
  \begin{align}
    \eta_i:=\sum_{F}\ip{\alpha_i,\ell_F}X_F\,,\quad\mbox 
  {where $F$ ranges over all facets of $Q$,}\label{eq:lsop}
  \end{align}
  for $\alpha_i\in S$. We write $\mathcal{A}^{i}(Q)$ for the degree-$i$ component of  $\mathcal{A}(Q)$.
\end{definition}

Note the following differences between $H^*(X_Q;\Q)$ in \cref{thm:danilov cohomology} and $\mathcal{A}(Q)$ in \cref{def:polytopal algebra}: 
the degrees of the generators $X_F$ differ due to the existence of a topological counterpart, and the ideals $J$ are defined slightly differently.
 We hope that these differences  will cause no ambiguity. In fact, the following result shows that the two algebras are isomorphic and that the polytopal algebra is ``well-defined''.

\begin{proposition}\phantom{}
  \label{prop:independent model}
  \begin{itemize}
    \item[$(1)$] The polytopal algebra $\mathcal{A}(Q)$ is defined independently from the models of the given root system.
    \item[$(2)$] Suppose that there is a lattice in $V$ to define $X_Q$. There is a natural algebra isomorphism $\mathcal{A}(Q)\cong H^{*}(X_{Q};\R)$ which  assigns each generator $X_F$ to a multiple of it. 
    \item[$(3)$] When $R$ is crystallographic, the above assignment induces isomorphisms $\mathcal{A}(P)\cong H^{*}(X_{P};\R)$ and $\mathcal{A}(P/W_K)\cong H^{*}(X_{P/W_K};\R)$.
  \end{itemize}
\end{proposition}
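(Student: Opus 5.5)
The whole proposition rests on one observation: changing the generators of $J$ by an invertible linear change of basis does not change the ideal, and rescaling a normal vector $\ell_F$ by a nonzero scalar $c_F$ is undone by the substitution $X_F\mapsto X_F/c_F$. This is exactly the mechanism of \cref{prop:indep lattice}, now over $\R$ with real scalars. Concretely, the ideal $J$ in \cref{def:polytopal algebra} is generated by the linear forms $\sum_F\ip{u,\ell_F}X_F$ for all $u\in\spn_{\R}(S)=V$. The forms $\eta_i$ are the images of the basis $\alpha_1,\dots,\alpha_n$ of $V$, and the assignment $u\mapsto \sum_F\ip{u,\ell_F}X_F$ is $\R$-linear. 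Hence $J$ is the image of $V$ under this map and depends only on $V$, not on the chosen basis.

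For $(1)$, a different model of the root system is a realization of the same abstract root system in $V$. It may differ by an orthogonal transformation and by rescaling, since the roots are only determined up to such changes, and possibly by rescaling the inner product. Under any such change, the simple roots still span $V$, so the ideal $J$ is unchanged by the previous paragraph. The unit normal vectors $\ell_F$ transform along with $Q$, and if the inner product is rescaled their normalization changes only by a positive scalar. Therefore $\mathcal{A}(Q)$ is unchanged up to the canonical identification of generators $X_F$, and up to the substitution $X_F\mapsto X_F/c_F$ where a normal vector was rescaled.

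For $(2)$, take a lattice $L$ of rank $n$ in $V$, with $Q$ rational with respect to $L$. By \cref{thm:danilov cohomology} tensored with $\R$, the ring $H^*(X_Q;\R)$ is $\mathcal{SR}(Q)$ modulo the forms $\sum_F\ip{u,\ell'_F}X_F$ for $u\in L$, where $\ell'_F=c_F\ell_F$ is the primitive normal and $c_F>0$. Since $L$ spans $V$ over $\R$, these forms span the same real vector space as the forms with $u\in V$. The map $X_F\mapsto c_F X_F$ (or its inverse, depending on direction) carries $J$ onto this ideal and preserves $I$, because it is diagonal on the generators. It therefore induces the isomorphism; the grading is doubled, which is harmless.

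Part $(3)$ specializes $(2)$ to $Q=P$ and $Q=P/W_K$ with $L=M$. Here $P$ is rational by construction and $P/W_K$ is rational by \cref{lem:simple flag} and the remarks in \cref{section:variety coho}. The only step I expect to need care is $(1)$: one has to pin down what ``model'' means and check that the unit normals transform compatibly. I would handle it by noting that any two models are related by a linear isometry composed with a scalar, which preserves unit normal directions.
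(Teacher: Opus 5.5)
Your proposal is correct and follows essentially the paper's route: both reduce everything to the mechanism of \cref{prop:indep lattice}. That mechanism is that $J$ depends only on the real span of the vectors $u$ used to build the linear forms, and that rescaling a normal $\ell_F$ by $c_F>0$ is undone by $X_F\mapsto X_F/c_F$. Your description of $J$ as the image of $V$ under $u\mapsto\sum_F\langle u,\ell_F\rangle X_F$ also proves (1) more generally than your closing remark suggests: it covers models in which individual simple roots are rescaled, which the paper explicitly allows, and not only isometries composed with a global scalar.
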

\begin{proof}
  When the type of $(R,S)$ is fixed, one can obtain another model $(R',S')$ of the same type by adjusting $S$; see \cite[Remark 1.2, Proposition 2.1]{humphreysReflectionGroupsCoxeter1992}. In fact, $S'$ is obtained from $S$ by scaling the simple roots or by applying an orthogonal transformation of $V$.

For $(1)$, $(2)$, and $(3)$, we obtain isomorphic algebras by adjusting the ideal $J$ (specifically, the $\eta_i$) in $\mathcal{SR}(Q)$. The argument is then analogous to that in the proof of \cref{prop:indep lattice}.
\qedhere
\end{proof}

We describe the polytopal algebras of $P$ and $P/W_K$ as follows:
\begin{itemize}
    \item The polytopal algebra $\mathcal{A}(P)$ is 
  $$ \mathcal{A}(P)=\frac{\mathcal{SR}(P)}{J_P}=\frac{\R[X_{F,s}: (F,s)\in \mathcal{F}]}{I_P+J_P},
  $$
  where  the ideal $J_P$ is generated by $n$ linear forms
  $$\sum_{(F,s)\in\mathcal{F}}\ip{\alpha_i,\ell_{s(F)}}X_{F,s}\qquad \mbox{for all}\,\, \alpha_i\in S.$$ 
  The algebra $\mathcal{A}(P)$ inherits the $W$-action on $\mathcal{SR}(P)$ described in \cref{eq:W-action}.
  \item The polytopal algebra $\mathcal{SR}(P/W_K)$ is
  $$ \mathcal{A}(P/W_K)=\frac{\mathcal{SR}(P/W_K)}{J_K}=\frac{\R[X_{F,e}, Y_k:\, F\in\mathcal{F}_K,\, k\in K]}{I_K+J_K},
  $$
  where the ideal $J_K$ is generated by $n$ linear form
  $$\sum_{F\in\mathcal{F}_K}\ip{\alpha_i,\ell_{F}}X_{F,e}+\sum_{k\in K}\ip{\alpha_i,\ell_{H_k}}Y_k\qquad \mbox{for all}\,\, \alpha_i\in S.$$
  \end{itemize}

\begin{remark}
 There are similar definitions of $\mathcal{A}(P_{\lambda})$ in  \cite[(1.2)]{stembridgePermutationRepresentationsWeyl1994} and \cite[\S4]{guiWeylGroupSymmetries2025}. The statement of \cref{prop:independent model} also holds for those definitions.
\end{remark}

\begin{lemma}\label{lem:vertex generate same}
  Suppose that $Q$ is simple.
  In $\mathcal{A}(Q)$, all square-free monomials $X_{F_{1}}{X_{F_{2}}}\cdots{X_{F_{n}}}$,  with $F_{1}\cap F_{2}\cap\cdots\cap F_{n}\ne\emptyset$, represent the same element up to a positive scalar.
\end{lemma}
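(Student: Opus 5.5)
The plan is to connect any two vertices of $Q$ by a path along edges of $Q$. Since the edge graph of a polytope is connected, it suffices to treat two adjacent vertices and show that their top monomials are positive multiples of each other. Let $v$ and $v'$ be adjacent vertices. Because $Q$ is simple, they share $n-1$ facets $F_1,\dots,F_{n-1}$, which cut out the edge $E=[v,v']$. Write $v=E\cap G$ and $v'=E\cap G'$, where $G\ne G'$ are the remaining facets at $v$ and $v'$. Put $m:=X_{F_1}\cdots X_{F_{n-1}}$. The goal is to show that $mX_G$ is a positive multiple of $mX_{G'}$ in $\mathcal{A}(Q)$.

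Multiply each linear form $\eta_u:=\sum_F\ip{u,\ell_F}X_F$ by $m$, where $u\in\spn(\alpha_1,\dots,\alpha_n)=V$. The linear forms $\eta_i$ span exactly $\{\eta_u: u\in V\}$ because the simple roots form a basis. In $\mathcal{SR}(Q)$, a product $mX_F$ vanishes unless $F\cap E\neq\emptyset$, that is, unless $F$ is one of $F_1,\dots,F_{n-1},G,G'$. The key simple-polytope fact needed is that every facet meeting the edge $E$ contains $v$ or $v'$, and these are exactly the facets listed.

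Choose $u$ orthogonal to $\ell_{F_1},\dots,\ell_{F_{n-1}}$, so that $u$ is parallel to the edge direction; take $u=v'-v$. Then
\[
0=m\,\eta_u=\ip{u,\ell_G}\,mX_G+\ip{u,\ell_{G'}}\,mX_{G'}
\]
in $\mathcal{A}(Q)$. It remains to check the signs. Since $\ell_G$ is outward normal to $G$, $v\in G$ and $v'\in Q\setminus G$, we get $\ip{v'-v,\ell_G}<0$. Similarly $\ip{v'-v,\ell_{G'}}>0$, because $v'\in G'$ and $v\notin G'$. Hence $mX_G=c\,mX_{G'}$ with $c=\ip{u,\ell_{G'}}/(-\ip{u,\ell_G})>0$.

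The main points needing care are small. The first is the combinatorial claim that only $F_i,G,G'$ meet $E$: any facet meeting the edge contains a face of $E$, hence contains $v$ or $v'$, and simplicity fixes the facets at each vertex. The second is that the monomials $X_{F_1}\cdots X_{F_n}$ are square-free with distinct facets, so these really are the vertex monomials. Chaining along an edge path and multiplying the positive constants then gives the statement.
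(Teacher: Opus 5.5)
Your proposal is correct and follows the paper's proof: reduce to adjacent vertices via connectivity of the edge graph, multiply the linear relation $\eta_u$, for $u$ orthogonal to the shared normals $\ell_{F_1},\dots,\ell_{F_{n-1}}$, by $X_{F_1}\cdots X_{F_{n-1}}$, and use simplicity to kill every term except the two vertex monomials. The only difference is in the sign check. You take $u=v'-v$ and read off the signs directly from the outward-normal inequalities at $v$ and $v'$. The paper instead expands $\ell_{F_0}$ in the normals at the other vertex and proves $c_n<0$ via the normal cones. Its stated condition $\langle u,\ell_{F_j}\rangle=1$ for $j<n$ should read $0$, which your choice of $u$ sidesteps.
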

\begin{proof}
  Suppose that $F_{0},F_{1},F_{2},\ldots,F_{{n-1}},F_{n}$ are distinct facets of $Q$ such that $F_{0}\cap F_{1}\cap\cdots\cap F_{{n-1}}\ne\emptyset$,  $F_{1}\cap F_{2}\cap\cdots\cap F_{{n}}\ne\emptyset$. 
  It suffices to show that $X_{F_{0}}X_{F_{1}}\cdots X_{F_{{n-1}}}= c\cdot X_{F_{1}}X_{F_{2}}\cdots X_{F_{{n}}}\in \mathcal{A}(Q)$ for some positive number $c$. Indeed, each vertex of $Q$ corresponds to a square-free monomial, and any two vertices are  are connected by edges of the form $F_{1}\cap F_{2}\cap\cdots\cap F_{n-1}$.

  Clearly, $\ell_{F_{0}}=\sum_{1\leq j\leq n}c_i\cdot\ell_{F_{j}}$ for some coefficients $c_j$. 
  We claim that \(c_n<0\); otherwise, the normal cones spanned by
$\ell_{F_{0}},\ldots,\ell_{F_{n-1}}$ and by
$\ell_{F_{1}},\ldots,\ell_{F_{n}}$
would intersect in the relative interiors of both cones.
 Choose $u$ in the real span of $S$ such that $\ip{u,\ell_{F_{n}}}=1$ and $\ip{u,\ell_{F_{j}}}=1$ for $1\leq j< n$. Then we obtain
 \begin{align*}
   {X_{F_{n}}}+c_nX_{F_{0}}+ \sum_{F}\ip{u,\ell_F}X_F\in J,
 \end{align*}
 where $F$ ranges over all other facets of $Q$.
 Since $Q$ is simple, these facets do not intersect
$F_{1}\cap\cdots\cap F_{n-1}$.
  Multiplying the above relation by
$X_{F_{1}}\cdots X_{F_{n-1}}$, we obtain
 \begin{align*}
  X_{F_{1}}{X_{F_{2}}}\cdots X_{F_{{n-1}}}{X_{F_{n}}}=-c_nX_{F_{0}}X_{F_{1}}\cdots X_{F_{{n-1}}}\in \mathcal{A}(Q).
 \end{align*}
 Since $c_n<0$, the scalar $-c_n$ is positive.
 \qedhere
\end{proof}

Let $f(Q)=(f_{-1},f_0,\ldots,f_{n-1})\in \Z^{n+1}$ denote the \textbf{$f$-vector} of the polytope $Q$, 
where $f_i$ is the number of faces of codimension $i+1$. 
The \textbf{$h$-vector} $h(Q)=(h_0,h_1,\ldots,h_n)$ and the \textbf{$h$-polynomial} $h_Q(t)$  are defined by 
\begin{align*}
  h_Q(t)=\sum_{i=0}^nh_it^i=\sum_{i=0}^nf_{i-1}t^i(1-t)^{n-i}.
\end{align*}
Note that the $\R$-basis of $\mathcal{SR}(Q)$ consists of all monomials $X_{F_{1}}^{n_1}X_{F_{2}}^{n_2}\cdots X_{F_{p}}^{n_p}$ with $F_{1}\cap F_{2}\cap\cdots\cap F_{p}\ne\emptyset$. 
Thus, the Hilbert series of  $\mathcal{SR}(Q)$ is 
\begin{align*}
  \sum_{F}\frac{t^{\mathrm{codim}(F)}}{(1-t)^{\mathrm{codim}(F)}}=\sum_{i=0}^nf_{i-1}\frac{t^i}{(1-t)^{i}}=\frac{h_Q(t)}{(1-t)^n},
\end{align*}
where $F$ ranges over all faces of $Q$.

\begin{lemma}\label{lem:isPDA}
 If $Q$ is simple, then $\mathcal{A}(Q)$ is a Poincar\'{e}  duality algebra over $\R$.
\end{lemma}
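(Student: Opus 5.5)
The plan is to follow the standard route for simple polytopes: show that the $\eta_i$ form a linear system of parameters, compute the Hilbert function, and then get Poincaré duality from a non-degeneracy argument in the spirit of the hard/McMullen theory. The purely combinatorial pieces are already available, so the main work is transporting them to this real, non-lattice setting.

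\textbf{Step 1: the $\eta_i$ form a regular sequence.} The normal vectors $\ell_F$ of a simple full-dimensional polytope form a complete simplicial fan. For each vertex $v=F_1\cap\cdots\cap F_n$, the vectors $\ell_{F_1},\dots,\ell_{F_n}$ form a basis of $V$. Since $\alpha_1,\dots,\alpha_n$ span $V$, the restriction of $(\eta_1,\dots,\eta_n)$ to the face ring of the star of $v$ is an invertible linear change of the variables $X_{F_1},\dots,X_{F_n}$. This is the usual criterion (Kind--Kleinschmidt / Stanley) that $\eta_1,\dots,\eta_n$ form a linear system of parameters of $\mathcal{SR}(Q)$. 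The boundary complex of a simple polytope is a simplicial sphere, hence Cohen--Macaulay by Reisner. Therefore the l.s.o.p.\ is a regular sequence, and the Hilbert series of $\mathcal{A}(Q)$ is
\[
\frac{h_Q(t)}{(1-t)^n}\cdot(1-t)^n=h_Q(t),
\]
using the Hilbert series of $\mathcal{SR}(Q)$ computed above. In particular $\mathcal{A}(Q)$ is finite-dimensional, lives in degrees $0,\dots,n$, and satisfies $\dim\mathcal{A}^0=h_0=1$ and $\dim\mathcal{A}^n=h_n=f_{n-1}\cdot 1\cdot(-1)^0$ evaluated appropriately, namely $h_n=1$ (Euler relation $h_n=h_0$ for spheres).

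\textbf{Step 2: the top degree.} By \cref{lem:vertex generate same}, all vertex monomials are positive multiples of a single element $\omega$. Since $\dim\mathcal{A}^n=1$, $\omega\ne 0$ and $\mathcal{A}^n=\R\omega$. Fix the isomorphism $\mathcal{A}^n\cong\R$ sending $\omega$ to $1$.

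\textbf{Step 3: non-degeneracy of the pairing.} Since $\mathcal{A}$ is a quotient of a Gorenstein ring by a regular sequence, $\mathcal{A}(Q)$ is Artinian Gorenstein. The boundary of a simple polytope is a homology sphere, so its Stanley--Reisner ring is Gorenstein (Stanley), and Gorensteinness passes to the quotient by a regular sequence. An Artinian graded Gorenstein algebra has a one-dimensional socle, which lies in top degree $n$. Non-degeneracy then follows formally. Let $0\ne a\in\mathcal{A}^i$, and take a nonzero element $ab$ of the ideal $a\mathcal{A}$ of maximal degree. Then $ab$ lies in the socle, so it lies in $\mathcal{A}^n$, and we may take $b$ homogeneous of degree $n-i$.

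\textbf{Main obstacle.} The hardest step is verifying the Gorenstein/Cohen--Macaulay input over $\R$ with non-rational normals. Those theorems are purely combinatorial, however: they depend only on the simplicial sphere and the field, so no difficulty is expected. The l.s.o.p.\ criterion in Step 1 needs only that, at each vertex, the $n$ normals are linearly independent, which simplicity guarantees.
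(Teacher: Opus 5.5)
Your proposal is correct and is essentially the paper's argument: the $\eta_i$ form a linear system of parameters of the Cohen--Macaulay ring $\mathcal{SR}(Q)$, so $\mathcal{A}(Q)$ has Hilbert series $h_Q(t)$, and Poincar\'e duality comes from the Gorenstein property of the face ring of a sphere, which you make explicit through the one-dimensional socle argument where the paper instead cites the symmetry $h_i=h_{n-i}$ and Harima et al. Two minor points: your Step~2 is not needed (as written it would also require knowing that $\mathcal{A}^n$ is spanned by square-free vertex monomials), and the garbled expression for $h_n$ in Step~1 should read $h_n=\sum_{i=0}^{n}(-1)^{n-i}f_{i-1}=1$.
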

\begin{proof}
  The $n$ linear forms $\eta_i$ in \cref{eq:lsop} are a \textbf{linear system of parameters} of $\mathcal{SR}(Q)$ (\cite[remark (p.150)]{stanleyBalancedCohenmacaulayComplexes1979}). 
  Since $\mathcal{SR}(Q)$ is \textbf{Cohen-Macaulay}, the $\eta_i$ for $1\leq i\leq n$ are algebraically independent (\cite[Corollary II.4.4]{stanleyCombinatoricsCommutativeAlgebra1996}
  ). Thus, there is an $\R$-module isomorphism (\cite[Corollary I.5.10]{stanleyCombinatoricsCommutativeAlgebra1996})
  $$
  \mathcal{SR}(Q)\cong \mathcal{A}(Q)\otimes\R[\theta_i:1\leq i\leq n].
  $$
  Consequently, $h_Q(t)$ is the Hilbert series of $\mathcal{A}(Q)$.
By \cite[Theorem II.1.3, Corollary II.5.2, p. 67]{stanleyCombinatoricsCommutativeAlgebra1996}, 
the $h$-vector $h(Q)$ satisfies $h_i=h_{n-i}$ for all $1\leq i\leq n$. In particular, $h_0=h_n=1$.
Finally,  by \cite[Remark 2.11.4, Theorem 2.79]{harimaLefschetzProperties2013}, $\mathcal{A}(Q)$ is a  PDA.
\qedhere
\end{proof}

\begin{proof}[\textbf{Proof of \cref{general ana ring iso}}]\label{pf:general ana iso}
The formula in \cref{eq:coefficients of s} still holds, now with all coefficients $C_{F,s,k}\in\R$.
We define the homomorphism
\begin{align}
    \psi:\mathcal{A}(P/W_K)&\to\mathcal{A}(P)^{W_K}\label{eq:general iso}
  \end{align}
  $$
   \psi(X_{F,e})=\sum_{s\in{W_K}/{W_F}}X_{F,s} \quad\text{and}\quad
  \psi(Y_{k})=\sum_{(F,s)\in\mathcal{F}} C_{F,s,k}X_{F,s}\,.
  $$
  Following the steps in \cref{section:proofs coho}, one can verify that
$\psi$ is well-defined and surjective.
The remaining step is to  show that $\mathcal{A}(P)^{W_K}\ne 0$; then \cref{lem:sur is iso} implies that $\psi$ is an isomorphism.
  
   By \cite[Lemma 10.7.1]{danilovGEOMETRYTORICVARIETIES1978}, $\mathcal{A}^n(P)=\R$ is generated by square-free monomials. 
   Suppose that $P$ has facets $F_1,F_2\ldots,F_n$  intersecting at a vertex, then $X_{F_1}X_{F_2}\cdots X_{F_n}\ne 0\in\mathcal{A}(P)$. 
   By \cref{lem:vertex generate same}, the sum $\sum_{s\in W_K}s(X_{F_1}X_{F_2}\cdots X_{F_n})$ is nonzero in $\mathcal{A}(P)^{W_K}$.
   \qedhere
\end{proof}

\section{Further Questions}\label{section:generalization}

In this section, we continue to work under the conditions stated at the beginning of \cref{section:faces}.
We discuss some interesting properties and open questions concerning $W$-symmetric polytopes $P$ and the associated polytopal algebras $\mathcal{A}(P)$.
In particular, we focus on the isomorphism between $\mathcal{A}(P/W_K)$ and $\mathcal{A}(P)^{W_K}$, and on the $W$-representation $\mathcal{A}(P)$.

\subsection{On isomorphisms}
It is not straightforward to generalize \cref{general ana ring iso} to the degenerate simple $W$-symmetric polytopes $P$, because the face structures of
 $P$ and $P/W_K$ become more complicated.

When the dimension $n$ is 2, the $W$-symmetric polygon $P$ has a relatively easy face structure. Song \cite{songToricSurfacesReflection2022} classified $W$-symmetric polygons to study the corresponding toric varieties, when they exist.
More explicitly, a $W$-symmetric polygon $P_{\Lambda}$ is classified by the size of $\Lambda\cap \partial C_K$, which can be 0, 1, or 2. 
This classification determines and is determined by three possible shapes of $P_{\Lambda}/W_K$, shown in \cref{figure:three shapes}.
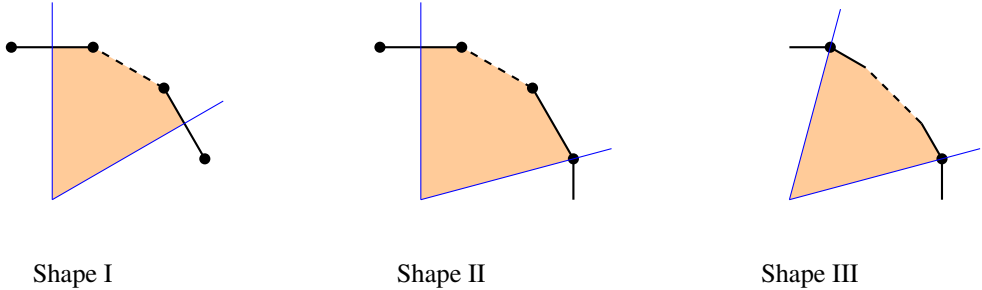
\begin{figure}[htpb]
  \centering
  \begin{tikzpicture}
    \begin{scope}[scale=0.7,shift={(-7,0)}]
    \newdimen\r
    \r=3cm

    \fill[orange!50,opacity=0.8] (0:0)-- (30:{cos(15)*\r})--(45:\r)--(75:\r)--(90:{cos(15)*\r})--cycle;

    \draw[thick] (75:\r)--(105:\r);
    \draw[thick] (15:\r)-- (45:\r);

    \draw[thick,dashed] (45:\r)-- (75:\r);
    \fill (75:\r) circle (3pt);
    \fill (105:\r) circle (3pt);
    \fill (15:\r) circle (3pt);
    \fill (45:\r) circle (3pt);

    \draw[blue] (0:0) -- (30:{1.25*\r});
    \draw[blue] (0:0) -- (90:{1.25*\r});

    \node at (285:{0.5*\r}) {Shape I};

  \end{scope}

  \begin{scope}[scale=0.7]%0
    \newdimen\r
    \r=3cm

    \fill[orange!50,opacity=0.8] (0:0)--(15:\r) --(45:\r)--(75:\r)--(90:{cos(15)*\r})--cycle;
    
    \draw[thick] (75:\r)--(105:\r);
    \draw[thick] (15:\r)-- (45:\r);
    \draw[thick,dashed] (45:\r)-- (75:\r);
    \draw[thick] (15:\r)-- (0:{cos(15)*\r});
    \fill (75:\r) circle (3pt);
    \fill (105:\r) circle (3pt);
    \fill (15:\r) circle (3pt);
    \fill (45:\r) circle (3pt);

    \draw[blue] (0:0) -- (15:{1.25*\r});
    \draw[blue] (0:0) -- (90:{1.25*\r});

    \node at (285:{0.5*\r}) {Shape II};

  \end{scope}

  \begin{scope}[scale=0.7,shift={(7,0)}] %1
    \newdimen\r
    \r=3cm
    
    \fill[orange!50,opacity=0.8] (0:0)--(15:\r) --(30:{cos(15)*\r}) -- (60:{cos(15)*\r})--(75:\r)--cycle;

    \draw[thick] (75:\r)--(90:{cos(15)*\r});
    \draw[thick] (15:\r)-- (30:{cos(15)*\r});
    \draw[thick,dashed] (30:{cos(15)*\r}) -- (60:{cos(15)*\r});

    \draw[thick] (60:{cos(15)*\r})-- (75:\r);
    \draw[thick] (15:\r)-- (0:{cos(15)*\r});

    \fill (75:\r) circle (3pt);
    
    \fill (15:\r) circle (3pt);

    \draw[blue] (0:0) -- (15:{1.25*\r});
    \draw[blue] (0:0) -- (75:{1.25*\r});

    \node at (285:{0.5*\r}) {Shape III};
    \end{scope}

 \end{tikzpicture}
 \caption{Three possible shapes of $P_{\Lambda}$. The orange area indicates $P_{\Lambda}\cap C_K$. When $|K|=1$, the blue polyline is actually a straight line.}\label{figure:three shapes}
\end{figure}

\begin{theorem}\label{generalization to polygons}
  For a $W$-symmetric polygon $P_{\Lambda}$ and any parabolic subgroup $W_K$,
  there is an explicit ring isomorphism
  $$\mathcal{A}(P_{\Lambda}/W_K)\cong \mathcal{A}(P_{\Lambda})^{W_K}.$$
\end{theorem}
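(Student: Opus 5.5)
We follow the strategy of the proof of \cref{general ana ring iso}: build an explicit map $\psi:\mathcal{A}(P_{\Lambda}/W_K)\to\mathcal{A}(P_{\Lambda})^{W_K}$, show it is well-defined and surjective, and conclude with \cref{lem:sur is iso}. In dimension two every polygon is simple, so $\mathcal{A}(P_{\Lambda})$ and $\mathcal{A}(P_{\Lambda}/W_K)$ are Poincar\'e duality algebras by \cref{lem:isPDA}, with top degree $2$. The top-degree argument is also unchanged. By \cref{lem:vertex generate same}, all vertex monomials $X_{E}X_{E'}$ (with $E,E'$ adjacent edges) are positive multiples of one generator of $\mathcal{A}^2(P_{\Lambda})$. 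Hence $\sum_{s\in W_K}s(X_EX_{E'})\neq 0$, so $\mathcal{A}^2(P_{\Lambda})^{W_K}=\R$. It therefore suffices to define $\psi$ and check that it is well-defined and surjective. Since $\mathcal{A}(P_{\Lambda}/W_K)$ is generated in degree $1$, only degrees $1$ and $2$ matter.

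We treat the case $K=S$ with $|K|=2$ in detail; the case $|K|=1$ (a single reflection line) and $K=\emptyset$ are easier versions of the same argument. We split according to the three shapes of \cref{figure:three shapes}, i.e.\ by $|\Lambda\cap\partial C_K|\in\{0,1,2\}$.

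\emph{Shape I (non-degenerate).} This case is exactly \cref{general ana ring iso}, and $\psi$ is given by \cref{eq:general iso}.

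\emph{Shapes II and III.} A vertex $v\in\Lambda$ lies on some $H_k$. Then $v$ is a vertex of $P_{\Lambda}/W_K$ where the facet $H_k\cap P_{\Lambda}$ meets an edge $E$ of $P_{\Lambda}$ with $r_k(E)\neq E$. Moreover, $E\cap r_k(E)=\{v\}\neq\emptyset$, so \cref{lem:stabilizer}(2) fails. We keep the same formulas,
\[
\psi(X_{E,e})=\sum_{s\in W_K/W_E}X_{E,s},\qquad \psi(Y_k)=\sum_{(E,s)\in\mathcal{F}}C_{E,s,k}X_{E,s},
\]
where $\mathcal{F}_K$ now consists of the edges meeting the interior of $C_K$, and the $C_{E,s,k}$ come from \cref{eq:coefficients of s}. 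The proofs that $\psi$ lands in the invariants and kills $J_K$ carry over verbatim, since they use only \cref{eq:coefficients of s} and the linear relations.

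The generators of $I_K$ must be rechecked directly. The polygon $P_{\Lambda}/W_K$ is a cycle, so $I_K$ is generated by products of pairs of non-adjacent facets. For a non-adjacent pair $X_{E,e}Y_k$ with $E\cap H_k=\emptyset$, we repeat the argument of \cref{lem:kernel I_K}: we show $C_{E',t,k}=0$ whenever $E\cap t(E')\neq\emptyset$. The new feature is that $t(E')$ may meet $E$ only at a vertex on a wall $H_j$ with $j\neq k$, and then $t\in\langle r_j\rangle$ still gives $C_{E',t,k}=0$.

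For pairs $X_{E,e}X_{E',e}$ of non-adjacent edges, the product of orbit sums must vanish. The danger is the cross terms $X_{E,s}X_{E,s'}$ with $s\neq s'$ that appear in the surjectivity argument. In Shapes II and III these terms are nonzero, since $E$ and $r_k(E)$ meet at $v$.

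The main obstacle is surjectivity. \Cref{lem:image} used \cref{lem:stabilizer}(2) to identify an orbit sum of vertex monomials with a product of orbit sums, and this identification breaks at the degenerate vertices. In degree $1$ there is no problem: $\mathcal{A}^1(P_{\Lambda})^{W_K}$ is spanned by the orbit sums $\psi(X_{E,e})$. By Poincar\'e duality and $\dim\mathcal{A}^2(P_{\Lambda})^{W_K}=1$, degree $2$ is also fine once degree $1$ surjects. We prove this via \cref{lem:equiv surjection} applied to $\R[X_{E,s}]_1\to\mathcal{A}^1(P_{\Lambda})$. So only degree $0$ and degree $2$ need care, and degree $2$ holds because $\psi(X_{E,e}Y_k)$, or some other product, is a nonzero multiple of the orbit sum of a vertex monomial, again by \cref{lem:vertex generate same}. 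We would verify this explicitly at a non-degenerate vertex of $P_{\Lambda}/W_K$; such a vertex exists in every shape, for instance the origin $H_1\cap H_2$ when $|K|=2$. Surjectivity then holds in all degrees, and \cref{lem:sur is iso} finishes the proof.
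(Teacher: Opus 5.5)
Your proposal is correct and follows essentially the same route as the paper. It uses the same map $\psi$ of \cref{eq:general iso} and the same re-check that $\psi(X_{E,e}Y_k)=0$: at a degenerate vertex on $H_j$ the only new neighbour of $E$ is $r_j(E)$, and $C_{E,r_j,k}=0$ for $j\neq k$. It then gets surjectivity in degree $1$ from orbit sums and concludes with \cref{lem:sur is iso}.

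Two loose ends in your write-up are harmless. First, for distinct non-adjacent $E,E'\in\mathcal{F}_K$, no $W_K$-translates of $E$ and $E'$ are adjacent in $P_{\Lambda}$; the cross terms $X_{E,s}X_{E,s'}$ you worry about only occur when $E=E'$. So $\psi(X_{E,e}X_{E',e})=0$ exactly as in \cref{lem:kernel I_K}. Second, your Poincar\'e duality remark alone gives surjectivity in degree $2$: the $W_K$-invariant pairing on $\mathcal{A}^1(P_{\Lambda})$ stays non-degenerate on the nonzero subspace $\mathcal{A}^1(P_{\Lambda})^{W_K}$. The explicit check at the origin is therefore unnecessary.
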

\begin{proof}
   The map $\psi: \mathcal{A}(P/W_K)\to \mathcal{A}(P)^{W_K}$ is constructed as in \cref{eq:general iso}.  
   Repeating the arguments in \cref{section:proofs coho} and \cref{section:polytopal algebra}, we have that:
    the range of $\psi$ is well-defined; $\psi$ maps $J_K$ of $\mathcal{SR}(P/W_K)$ to 0; $\mathcal{A}(P/W_K)$ and $\mathcal{A}(P)$ are both Poincar\'{e}  duality algebras with top degree 2; and $(\mathcal{A}^2(P))^{W_K}=\R$.

    We now verify that \cref{lem:kernel I_K} holds, i.e., that $\psi$ maps $I_K$ of $\mathcal{SR}(P/W_K)$ to 0.
    It suffices to show that $\psi(X_{F,e}Y_{1})=0$ when $F\in \mathcal{F}_K$ and $H_1$ are disjoint and support edges of $P/W_K$. 
    One has
   \begin{align*}
    \psi\left(X_{F,e}Y_{1}\right)=\sum_{s\in {W_K}/{W_{F}}}s\left(\sum_{(F',t)\in\mathcal{F}}  C_{F',t,1}X_{F,e}X_{F',t}\right). 
  \end{align*} 
  The only possible case for $X_{F,e}X_{F',t}\ne 0$ with $t(F')\ne F'$ is when $F$ intersects $H_2$ (if $H_2$ supports $\partial C_K$). Then either $r_2(F)=t(F')$ or $r_2(F)=F$. Hence $t=r_2$ and $C_{F',t,1}=0$ whenever $X_{F,e}X_{F',t}\ne 0$.

  To complete the proof, it remains to show that $\psi$ is surjective; then \cref{lem:sur is iso} implies that $\psi$ is an isomorphism. Since $\mathcal{A}^0(P)=\mathcal{A}^2(P)=\R$, it suffices to show that  $(\mathcal{A}^1(P))^{W_K}$ is generated by the elements $\sum_{s\in W_K/W_F}X_{F,s}$ for $F\in\mathcal{F}_K$. Clearly, these elements generate $(\mathcal{SR}^1(P))^{W_K}$, and hence also $(\mathcal{A}^1(P))^{W_K}$.
\qedhere
\end{proof}

So we would like to ask:
\begin{question}[{cf. \cite[Question 8.2]{horiguchiToricOrbifoldsAssociated2024}}]
  For which $W$-symmetric polytopes does the isomorphism in \cref{general ana ring iso} hold?
\end{question}

We are closer to understanding the 3-dimensional case, as part of \cref{lem:simple flag} holds as follows:
\begin{lemma}
  If a 3-dimensional $W$-symmetric polytope $P$ is simple, then so is the quotient polytope $P/W_K$.
\end{lemma}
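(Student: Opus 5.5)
We need to show: if a 3-dimensional $W$-symmetric polytope $P$ is simple (possibly degenerate), then $P/W_K=P\cap C_K$ is simple. The plan is to check every vertex $v$ of $P\cap C_K$ and show it lies in exactly three facets. We sort these vertices by how many of the walls $H_k$, $k\in K$, pass through $v$; call this number $q\in\{0,1,2,3\}$.

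\textbf{Cases $q=0$ and $q=3$.}
\begin{itemize}
\item If $q=0$, then $v$ lies in the interior of $C_K$. So it is a vertex of $P$, and a neighbourhood of $v$ in $P\cap C_K$ agrees with one in $P$. Hence $v$ lies in exactly three facets, since $P$ is simple.
\item If $q=3$, then $v$ lies on three walls with linearly independent normals. These walls meet only at the origin, and the origin is an interior point of $P$, not a vertex of $P\cap C_K$. So this case cannot occur.
\end{itemize}

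\textbf{Case $q=2$.} Say $v\in H_i\cap H_j$. The stabilizer $W_v=\langle r_i,r_j\rangle$ is a dihedral group by \cref{thm:fundamental chamber}.(2), and it fixes the line $L=H_i\cap H_j$ pointwise. Locally near $v$, the intersection $P\cap L$ is one endpoint of the segment $P\cap L$. Let $G$ be the smallest face of $P$ containing $v$; it is $W_v$-stable.
\begin{itemize}
\item If $G$ is a facet, its supporting plane is $W_v$-invariant and transverse to $L$. Hence its normal is parallel to $L$, i.e.\ orthogonal to $\alpha_i$ and $\alpha_j$ (as in \cref{lemma:intersection HF}). Then $v$ lies on exactly that facet of $P\cap C_K$ together with $H_i$ and $H_j$, giving three facets.
\item If $G$ is an edge, the edge is $W_v$-stable and passes through a point of $L$ in its relative interior. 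This forces its direction to be fixed or reversed by $W_v$, so the direction lies in $L$ or is orthogonal to it. Only the orthogonal option is possible, which would need $W_v\subset\{\pm1\}$ acting on a line inside $L^\perp$. That cannot hold for a dihedral group of order at least $4$, so this subcase is excluded.
\item If $G$ is a vertex, it is a $W_v$-fixed vertex of $P$. By simplicity it lies on exactly three facets, and $W_v$ permutes them. Their normals span $V$, and the fixed line $L$ must be the sum direction. I would argue that $W_v$ acts on these three facets as a transitive group, which is possible only for $A_2$ (order-$6$ dihedral) acting as $S_3$. In that case the three facets are cut by the walls $H_i,H_j$ into pieces, and only one of them meets $C_K$ near $v$. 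So $v$ lies on one facet of $P$ and on $H_i,H_j$, three facets total. If $W_v$ instead fixes a facet and swaps the other two, that is incompatible with a dihedral group of order at least $4$ acting faithfully on three normals.
\end{itemize}

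\textbf{Case $q=1$.} Say $v\in H_i$, with $W_v=\{e,r_i\}$. Let $G$ be the minimal face of $P$ containing $v$.
\begin{itemize}
\item If $G$ is a facet, then $v$ is not a vertex of $P\cap C_K$ unless two further facets meet there, which puts $v$ on an edge; so this case reduces to the next one.
\item If $G$ is an edge stable under $r_i$ and crossing $H_i$, its two facets are either swapped or both fixed by $r_i$. If they are swapped, only one of them lies on the $C_K$ side near $v$, so $v$ is on that facet and $H_i$ only, which is not a vertex. If both are fixed, $v$ is on those two facets and $H_i$, giving three facets.
\item If $G$ is a vertex of $P$ lying on $H_i$, then $r_i$ permutes its three facets, fixing one and swapping two. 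The swapped pair is separated by $H_i$. Hence $v$ lies on the fixed facet, one of the swapped pair, and $H_i$, giving three facets.
\end{itemize}

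\textbf{Main obstacle.} I expect the hardest step to be the case-$2$ degenerate analysis: showing that a vertex of $P$ on $L$ with dihedral stabilizer forces exactly the $A_2$-type configuration. It also has to rule out stabilizers of order $8$, $10$, or more acting on three facets. I would handle this by counting: a dihedral group of order at least $8$ has no faithful action on three facets that is compatible with reflections acting on $V$. Faithfulness follows because an element acting trivially on the three facets fixes all three normals, which span $V$, so it is the identity.
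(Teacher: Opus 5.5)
Your strategy (sorting a vertex $v$ of $P\cap C_K$ by the number $q$ of walls through it and by the minimal face $G$ of $P$ containing it, then reading off the local cone) is a legitimate alternative to the paper's argument. However, the edge subcase of Case $q=2$ contains a false step. You correctly reduce to an edge direction $d\in L^\perp$ spanning a $W_v$-invariant line, and then claim this is impossible for a dihedral group of order at least $4$. That is true only when $m_{ij}\ge 3$, where $L^\perp$ is an irreducible $W_v$-module. If $\alpha_i\perp\alpha_j$, then $\R\alpha_i$ and $\R\alpha_j$ are $W_v$-invariant lines on which $W_v$ acts through $\{\pm1\}$ non-faithfully, and nothing forbids this.

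The configuration does occur. Take type $A_1\times A_1\times A_1$ with simple roots $e_1,e_2,e_3$ and $\Lambda=\{(1,1,0),(1,0,1)\}$. Then $P=\{x:\,|x_1|\le 1,\ |x_2|+|x_3|\le 1\}$ is simple and degenerate. For $K=S$, the vertex $v=(0,0,1)$ of $P\cap C_S$ lies on $H_1\cap H_2$ and in the relative interior of the edge of $P$ joining $(-1,0,1)$ and $(1,0,1)$, whose direction is $\alpha_1$. So your case analysis never treats this vertex. It is still simple, and the repair is short. If $d=\alpha_i$, the edge lies in $H_j$. The two facets through it have normals orthogonal to $\alpha_i$, so $r_i$ fixes both. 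Then $r_j$ must swap them: if it fixed them, both unit normals would lie on the line $L$ and be equal or opposite. Hence exactly one of the two facets lies on the side $\ip{x,\alpha_j}\ge 0$, and $v$ lies on exactly that facet, $H_i\cap P$ and $H_j\cap P$.

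A second, harmless error: in Case $q=3$, the origin \emph{is} a vertex of $P\cap C_K$ when $K=S$. Being interior to $P$ does not prevent it from being the apex of the simplicial cone $C_S$; in the example above, $0$ is a vertex of the triangular prism $P\cap C_S$. It lies on exactly the three facets $H_k\cap P$, so it is simple, but ``cannot occur'' is wrong. Similarly, in Case $q=1$ the ``swapped'' alternative really corresponds to an edge lying inside $H_i$, not crossing it, though your dichotomy still covers both situations.

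The paper sidesteps all such existence questions. It labels the facets through a vertex as $p$ members of $\mathcal{F}_K$ and $q$ walls, and only rules out $p+q\ge 4$. For this it uses that a reflection fixing a vertex of $P$ permutes the facets there, while distinct members of $\mathcal{F}_K$ are not $W_K$-conjugate; for $p=q=2$ it also uses that a facet fixed by $r_i$ and $r_j$ meets $H_i\cap H_j$ only at its barycenter. Since every vertex of a $3$-polytope lies on at least three facets, this upper bound suffices, and configurations like the one above need no separate analysis.
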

\begin{proof}
  Suppose that a vertex of $P/W_K$ is contained in at most the following facets: $F_1,\ldots,F_p \in \mathcal{F}_K$ ($1 \le p \le 3$) and $H_{k_1},\ldots,H_{k_q}$ with $k_i \in K$ ($1 \le q \le 3$, if possible). We rule out all cases below to complete the proof.

  The case $q = 3$ is impossible, since $H_1 \cap H_2 \cap H_3 = \{0\}$ and the facets $F_i$ do not contain the origin.

  The case $p = 3$ and $q \ge 1$ is impossible. Suppose that $F_1, F_2, F_3$, and $H_1$ meet at a vertex. Since $\ip{\ell_{H_1}, \ell_{F_i}} \ne 0$ for some $i$, we may assume that $r_1(F_1) \ne F_1$. Because $P$ is simple, we have that $r_1(F_1)$ must be either $F_2$ or $F_3$, which implies that $F_2$ or $F_3$ does not lie in $\mathcal{F}_K$.

  The remaining case is $p = q = 2$. Since $\ip{\ell_{H_i}, \ell_{F_j}} \ne 0$ for some $i, j$, we may assume that $r_1(F_1) \ne F_1$. Then $r_1(F_1)$, $F_1$, and $F_2$ meet.
We then have $r_2(F_1) = F_1$ and $r_1(F_2) = r_2(F_2) = F_2$. Hence $H_1 \cap H_2 \cap F_2$ is the barycenter of $F_2$, which cannot lie in $F_1$.
   \qedhere
\end{proof}

\subsection{On representations} The polytopal algebra $\mathcal{A}(P)$ is naturally a $W$-representation. Recall that a \textbf{permutation representation} of $W$ (over $\R$) is a vector space with a basis being  a $W$-set. Consequently, the character of a permutation representation takes integer values.

\begin{example}
  We follow \cref{example: I(5) and A_2} to examine the character of $\mathcal{A}(P_{\lambda})$ as a graded $W$-representation. 
  It suffices to compute the traces of $r_1$ and $r_2$ on $\mathcal{A}^1(P_{\lambda})$.
  \begin{itemize}
      \item If $R$ is of type $I_2(5)$ with $\lambda\in H_2$, then $\mathrm{Tr}(r_1)=1$ and $\mathrm{Tr}(r_2)=1-\sqrt{5}$.
      \item If $R$ is of type $A_2$ with $\lambda\in C_S\setminus\partial C_S$, then $\mathrm{Tr}(r_1)=\mathrm{Tr}(r_2)=1$. This  also holds for  $R$  of type $I_2(m)$.
    \end{itemize}
\end{example}

Using \cref{prop:independent model}, we see that $\mathcal{A}(P_{\lambda})$ is a permutation representation when $P_{\lambda}$ is non-degenerate and $R$ is crystallographic. Indeed, \cite{stembridgePermutationRepresentationsWeyl1994} proves this for $H^*(X_{P_{\lambda}})$.
This leads to the following natural question:
\begin{question}[{cf.~\cite[Question 11.1]{stembridgePermutationRepresentationsWeyl1994}}]
For which $W$-symmetric polytopes $P$ is the polytopal algebra $\mathcal{A}(P)$ a permutation representation of $W$?
\end{question}

\printbibliography

\end{document}